\documentclass[10pt]{amsart}
\usepackage[margin=2.7cm]{geometry}
\usepackage{amsmath,amssymb,amsthm,mathtools}
\usepackage{booktabs,array}
\usepackage{url}
\usepackage{enumitem}
\usepackage{xcolor}
\usepackage{listings}
\usepackage[expansion=false]{microtype}
\usepackage[colorlinks=true,linkcolor=blue!50!black,citecolor=blue!50!black,urlcolor=blue!50!black]{hyperref}

\usepackage{embedfile}
\ifdefined\pdfgentounicode
  \input{glyphtounicode}
\fi
\newtheorem{theorem}{Theorem}[section]
\newtheorem{proposition}[theorem]{Proposition}
\newtheorem{lemma}[theorem]{Lemma}
\newtheorem{corollary}[theorem]{Corollary}
\theoremstyle{definition}

\newtheorem{remark}[theorem]{Remark}

\newtheorem{question}[theorem]{Question}

\newcommand{\PP}{\mathbb{P}}
\newcommand{\CC}{\mathbb{C}}
\newcommand{\eorb}{e_{\rm orb}}
\newcommand{\lct}{{\rm lct}}
\newcommand{\mult}{{\rm mult}}
\DeclareMathOperator{\Sing}{Sing}

\begin{document}

\title{There are no Miyaoka--Kobayashi octics}

\author{Alexandru Dimca}
\address{Alexandru Dimca: Universit\'e C\^ote d'Azur, CNRS, LJAD, France}
\address{Simion Stoilow Institute of Mathematics, Bucharest, Romania}
\email{Alexandru.DIMCA@univ-cotedazur.fr}

\author{Piotr Pokora}
\address{Piotr Pokora: Department of Mathematics, University of the National Education Commission Krakow,
Podchor\c a\.zych 2, PL-30-084 Krak\'ow, Poland}
\email{piotr.pokora@up.krakow.pl}
\dedicatory{To Wolfgang Ebeling, in memoriam}
\subjclass[2020]{Primary 14H50; Secondary 14J17, 14N20, 32S25}
\keywords{Miyaoka--Kobayashi curves, ADE singularities, orbifold Bogomolov--Miyaoka--Yau inequality, dual curves, defect of freeness}

\begin{abstract}
Let $C$ be a reduced plane curve of degree $n$ with only ADE singularities. Using K\"ahler--Einstein metrics, Kobayashi proved an upper bound, in terms of $n$, for a sum of local invariants of the singular points of $C$. For even $n$ this bound had been obtained earlier by Hirzebruch and Ivinskis. Curves of even degree for which equality holds are called Miyaoka--Kobayashi curves, and the only known examples are three types of plane sextics. We show that there are no Miyaoka--Kobayashi curves of degree $8$, and additionally that a Miyaoka--Kobayashi curve of degree $10$ cannot be free. The key new ingredient is a new linear constraint on the singularities of such curves, obtained by differentiating Langer's orbifold Bogomolov--Miyaoka--Yau inequality.
\end{abstract}

\maketitle

\section{Introduction}

Let $C\subset\PP^2$ be a reduced complex plane curve of degree $n$ having only ADE singularities. For a singular point $p$ of $C$ we denote by $\mu_p=\tau_p$ its Milnor (equal to its Tjurina) number, and by $G_p\subset SU(2)$ the binary polyhedral group of the ADE surface singularity $z^2=f_p(x,y)$ with $f_p=0$ being a local equation of $C$ at $p$. Using K\"ahler--Einstein metrics on log canonical surfaces with branch loci, Kobayashi \cite[Corollary 2]{Ko} proved that for $n\geq7$ the following inequality holds:
\begin{equation}\label{eq:Ko}
\sum_{p\in\Sing(C)}\Bigl(\mu_p+1-\frac1{|G_p|}\Bigr)\leq\frac56n^2-n.
\end{equation}
Note that $|\Gamma(p)|=2|G_p|$ in the notation of \cite{Ko}. For $n$ even, \eqref{eq:Ko} was obtained before by Hirzebruch and Ivinskis \cite{Hi,Iv} using the double cover of $\PP^2$ branched along $C$. With the Miyaoka--Kobayashi numbers defined as $m(p)=3(\mu_p+1-1/|G_p|)$, see \cite[Lemma 3.1]{DP}, the inequality \eqref{eq:Ko} reads as
$$m(C):=\sum_{p\in\Sing(C)}m(p)\leq\frac n2(5n-6),$$
and a curve $C$ of even degree $n\geq6$ is called a \emph{Miyaoka--Kobayashi curve}, or an \emph{MK-curve}, when equality holds, see \cite{Hi, Iv} and \cite[Definition 2.4]{DP}. For $n\geq8$, the double cover of $\PP^2$ branched along an MK-curve is a ball quotient surface, see \cite[Section 2]{DP} for necessary details. Ivinskis proved in his master’s thesis \cite{Iv} that there are exactly three types of MK-curves of degree $6$ (see also \cite[Theorem 3.9]{DP}). By contrast, \emph{no} MK-curve of degree $n \geq 8$ is known. In particular, the existence of MK-octics has remained open since Ivinskis raised the problem in his Bonn Diplomarbeit \cite{Iv}, supervised by Hirzebruch.

The inequality \eqref{eq:Ko} is the case $D=\frac12C$ of the orbifold Miyaoka--Yau inequality for the pair $(\PP^2,D)$, see \cite[Theorem]{Ko}. Langer \cite{La} proved such an inequality for the pairs $(\PP^2,\alpha C)$ with any real $\alpha$ for which the pair is log canonical and $K_{\PP^2}+\alpha C$ is pseudoeffective, see Theorem \ref{thm:Langer} below. This inequality was the main tool used in \cite[Section 5]{DP}. For $\alpha=\frac12$ it gives back \eqref{eq:Ko}, so an MK-curve gives an equality in Langer's inequality at $\alpha=\frac12$. When $n\geq8$, the value $\frac12$ is an interior point of the interval of admissible values of $\alpha$, and both sides of Langer's inequality depend smoothly on $\alpha$ near $\frac12$. Hence the derivatives of both sides at $\alpha=\frac12$ must be equal, and this gives a \textbf{linear constraint} on the singularities of an MK-curve. Recall that $\tau(C)=\sum_p\tau_p$ is the total Tjurina number of $C$.

\begin{theorem}\label{thm:main}
Let $C$ be an MK-curve of even degree $n\geq 8$. Then
$$\sum_{p\in\Sing(C)}c_p=2n^2-3n,$$
where $c_p=3k$ if $p$ is a point of type $A_k$, $c_p=\frac{3(k-1)(2k-3)}{2(k-2)}$ if $p$ is of type $D_k$, and $c_p=\frac{33}{2},\frac{153}{8},\frac{87}{4}$ if $p$ is of type $E_6,E_7,E_8$ respectively. Equivalently, one has
$$3\tau(C)-\sum_{p\in\Sing(C)}d_p=2n^2-3n,$$
where $d_p=3\tau_p-c_p$, i.e. $d_p=0$ for $A_k$, $d_p=\frac{3(k-3)}{2(k-2)}$ for $D_k$, and $d_p=\frac32,\frac{15}{8},\frac94$ for $E_6,E_7,E_8$ respectively.
\end{theorem}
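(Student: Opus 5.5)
The approach is to differentiate Langer's orbifold Bogomolov--Miyaoka--Yau inequality (Theorem \ref{thm:Langer}) with respect to $\alpha$ at $\alpha=\frac12$. For the pair $(\PP^2,\alpha C)$ this inequality should take the form
$$\sum_{p\in\Sing(C)} 3\Bigl(\alpha(\mu_p-1)+1-\eorb(p;\alpha C)\Bigr)\le 3\alpha n-\frac{3}{2}\alpha^2 n^2+\ldots,$$
or equivalently
$$F(\alpha):=3c_2(\PP^2,\alpha C)-\bigl(K_{\PP^2}+\alpha C\bigr)^2\ge 0 .$$
Here the global terms are $(K+\alpha C)^2=(\alpha n-3)^2$, and the Euler-characteristic part is $3\bigl(3-\alpha(3n-n^2)\bigr)$ minus local corrections. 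These corrections are $3\sum_p\bigl(\alpha(\mu_p+\ldots)-(1-\eorb_p(\alpha))\bigr)$, where $\eorb_p(\alpha)$ is the local orbifold Euler number of $(\CC^2,\alpha\{f_p=0\})$ at $p$. I would first write $F(\alpha)$ explicitly for all $\alpha$ with $(\PP^2,\alpha C)$ log canonical and $K+\alpha C$ pseudoeffective. That range is $\frac3n\le\alpha\le\min_p\lct_p$. For ADE points the log canonical threshold is $\ge\frac12+\frac1{k+1}>\frac12$ (for $E_8$ it is $\frac{8}{15}$). Also $\frac3n<\frac12$ once $n\ge 8$, since $n=6$ gives exactly $\frac12$. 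So $\frac12$ is an interior point.

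At $\alpha=\frac12$ the inequality becomes \eqref{eq:Ko}. Indeed, $\eorb_p(\frac12)=\frac1{|G_p|}$, since the double cover is the quotient $\CC^2/G_p$. So an MK-curve gives $F(\frac12)=0$ with $F\ge0$ on a neighbourhood of $\frac12$. Provided $F$ is differentiable at $\frac12$, this forces $F'(\frac12)=0$. The global part of $F$ is an explicit quadratic polynomial in $\alpha$. Differentiating it at $\frac12$ produces a multiple of $n^2$ and $n$ together with $\mu_p$-terms. The term $2n^2-3n$ should emerge after I collect the contributions of $\tau(C)$, which enter linearly in $\alpha$ through $\alpha\mu_p$-type terms.

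The main obstacle is the local computation: I need $\eorb_p(\alpha)$ as a function of $\alpha$ near $\frac12$ for each ADE type, and its derivative at $\frac12$. For $A_k$, I would use Langer's formula for $x^2=y^{k+1}$, computed via the Newton polygon or weighted blow-up with weights $(k+1,2)$. For $\alpha$ below the log canonical threshold this should give
$$\eorb_p(\alpha)=\frac{(1-2\alpha)(1-(k+1)\alpha)\cdots}{\ldots}\quad\text{or}\quad 2\,\frac{(\frac{1}{2}+\frac1{k+1}-\alpha)^2}{\ldots}\cdot(k+1),$$
a quadratic in $\alpha$ determined by the area of the region under the Newton diagram. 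For quasi-homogeneous singularities, with weights $w_x,w_y$ and degree $1$, I expect
$$\eorb(\alpha)=\frac{(w_x+w_y-\alpha)^2}{w_xw_y}$$
for $\alpha\le\lct=w_x+w_y$. This formula is valid for $\alpha$ near $\frac12$ because $\frac12<\lct$. Check: for $A_k$ with $w=(\frac12,\frac1{k+1})$ at $\alpha=\frac12$ it gives $\frac{(1/(k+1))^2}{1/(2(k+1))}=\frac{2}{k+1}$, and this agrees with $|G_p|=k+1$ up to the normalization conventions of \cite{La}, which I will fix carefully. All ADE singularities are quasi-homogeneous: $D_k$ has weights $(\frac{k-2}{2(k-1)},\frac1{k-1})$, and $E_6,E_7,E_8$ have weights $(\frac13,\frac14)$, $(\frac13,\frac29)$ and $(\frac13,\frac15)$ respectively. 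So the derivative is
$$-\frac{2(w_x+w_y-\frac12)}{w_xw_y}.$$

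Summing the contributions, each point then contributes a term $c_p$ determined by its type. As a sanity check, for $A_k$ I expect $c_p=3k=3\mu_p$, and for $D_k$, $E_k$ the values in the statement should result. Equating with the global derivative gives $\sum_p c_p=2n^2-3n$. The equivalent form follows by substituting $c_p=3\tau_p-d_p$ and using $\tau_p=\mu_p$.
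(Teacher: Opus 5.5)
Your strategy is the paper's: $\Phi\ge 0$ on $[\frac3n,\min_p\lct_p]$, $\Phi(\frac12)=0$ for an MK-curve, $\frac12$ is interior when $n\ge 8$, hence $\Phi'(\frac12)=0$. The gap is that the content of the theorem is the specific constants $c_p$. The formulas you write down do not produce them, and what you defer as ``normalization conventions \ldots\ which I will fix carefully'' is exactly the missing step.

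\textbf{The local formula.} The correct formula (Lemma \ref{lem:eorb}, from Langer's result for the star-shaped minimal log resolution) is $\eorb(p,\alpha)=\frac{(w_1+w_2-\alpha)^2}{4w_1w_2}$. It holds only for $\alpha_p\le\alpha\le\lct_p$ with $\alpha_p<\frac12$, not on all of $[0,\lct_p]$: at $\alpha=0$ one must get $1$, which fails e.g.\ for $A_2$. So $\frac12<\lct_p$ alone does not show that $\Phi$ is polynomial near $\frac12$; you also need $\alpha_p<\frac12$. Without the factor $4$, your derivative gives for $A_k$ the contribution $3(k-1)+12=3k+9$ instead of $3k$.

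\textbf{The value at $\frac12$.} One has $\eorb(p,\frac12)=\frac1{2|G_p|}$, not $\frac1{|G_p|}$. The local double cover $Y\to\CC^2$ has degree $2$, so the orbifold Euler number $\frac1{|G_p|}$ of $\CC^2/G_p$ is halved. With your value, Langer's inequality at $\alpha=\frac12$ becomes $\sum_p\bigl(\mu_p+1-\frac2{|G_p|}\bigr)\le\frac56n^2-n$, which is not \eqref{eq:Ko}. Then the MK condition would not even give $\Phi(\frac12)=0$. Your two values for $A_k$, namely $\frac{1}{k+1}$ and $\frac{2}{k+1}$, are also inconsistent with each other, and the correct one is $\frac1{2(k+1)}$.

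\textbf{The global side.} Expanding $3\eorb(\PP^2,\alpha C)-(K_{\PP^2}+\alpha C)^2$ gives
$(3\alpha-\alpha^2)n^2-3\alpha n-3\sum_p\bigl(\alpha(\mu_p-1)+1-\eorb(p,\alpha)\bigr)$.
So the right-hand side is not $3\alpha n-\frac32\alpha^2n^2+\ldots$. The local correction is $\alpha(\mu_p-1)+(1-\eorb_p)$, not $\alpha(\mu_p+\ldots)-(1-\eorb_p)$; your sign flips the $\eorb$-contribution. The number $2n^2-3n$ is simply the derivative of $(3\alpha-\alpha^2)n^2-3\alpha n$ at $\frac12$. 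The $\mu_p$ do not contribute to it; they enter $c_p$ through the term $3(\mu_p-1)$.

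With these corrections, the derivative of the $p$-term is $3(\mu_p-1)+\frac{3(w_1+w_2-\frac12)}{2w_1w_2}$. Plugging in the weights gives $3k$, $\frac{3(k-1)(2k-3)}{2(k-2)}$, and $\frac{33}{2},\frac{153}{8},\frac{87}{4}$. This is the computation you announce (``the values in the statement should result'') but never carry out, and with your stated formula it would come out wrong.
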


Note that $d_p\geq\frac34$ for any singularity of type $D$ or $E$. 

Let us recall that the \emph{freeness defect} $\nu(C)$ of $C$, which was introduced by Dimca in \cite{Di1}, is $0$ (resp. $1$) exactly when $C$ is free (resp. nearly free), and that $\nu(C)=3m^2-3m+1-\tau(C)$ for any curve $C$ of degree $n=2m$ with only ADE singularities, see \cite[Section 6]{DP}. The following consequences are immediate.

\begin{corollary}\label{cor:main}
Let $C$ be an MK-curve of degree $n=2m\geq 8$. Then the following hold.
\begin{enumerate}[label=(\roman*)]
\item $\tau(C)\geq\frac{8m^2-6m}{3}$, or equivalently $\nu(C)\leq\frac{m^2-3m+3}{3}$.
\item If $n$ is not divisible by $3$, then $C$ has at least one singularity of type $D$ or $E$. In particular, $C$ cannot be nodal or cuspidal.
\item The number of singularities of type $D$ or $E$ on $C$ is at most $\frac43(m^2-3m+3)$.
\end{enumerate}
In particular, an MK-curve of degree $8$ satisfies $35\leq\tau(C)\leq 37$ and $\nu(C)\leq 2$, while an MK-curve of degree $10$ satisfies $57\leq\tau(C)\leq 61$ and $\nu(C)\leq 4$.
\end{corollary}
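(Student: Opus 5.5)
The corollary follows from Theorem~\ref{thm:main} together with the MK-equality $m(C)=\frac n2(5n-6)$ and the formula $\nu(C)=3m^2-3m+1-\tau(C)$. The plan is to write everything in terms of $\tau(C)$ and the total defect $\Delta:=\sum_p d_p\ge 0$, and then derive upper and lower bounds.

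\textbf{Part (i).} Since $d_p\ge 0$ for every ADE point, Theorem~\ref{thm:main} gives $3\tau(C)=2n^2-3n+\Delta\ge 2n^2-3n$. With $n=2m$ this is $\tau(C)\ge \frac{8m^2-6m}{3}$. Substituting into the formula for $\nu(C)$ gives
$$\nu(C)\le 3m^2-3m+1-\frac{8m^2-6m}{3}=\frac{m^2-3m+3}{3}.$$

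\textbf{Part (ii).} If $C$ has only $A$-type singularities, then $\Delta=0$, so $3\tau(C)=2n^2-3n=n(2n-3)$. Since $\tau(C)$ is an integer, $3$ divides $n(2n-3)$. As $2n-3\equiv 2n \pmod 3$, this forces $3\mid n$. Hence, when $3\nmid n$, some point of type $D$ or $E$ must occur. Nodes and cusps are $A_1$ and $A_2$, so nodal and cuspidal curves are excluded.

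\textbf{Part (iii).} We combine $\nu(C)\ge 0$ with $d_p\ge\frac34$ at each $D$ or $E$ point. Let $N$ be the number of such points. The lower bound on $d_p$ comes from the formulas in Theorem~\ref{thm:main}: for $D_k$ with $k\ge4$ the value $\frac{3(k-3)}{2(k-2)}$ increases in $k$, with minimum $\frac34$ at $D_4$, and the $E$ values are all larger. Then
$$\frac34N\le\Delta=3\tau(C)-(8m^2-6m)\le 3(3m^2-3m+1)-(8m^2-6m)=m^2-3m+3,$$
so $N\le\frac43(m^2-3m+3)$.

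\textbf{Numerical cases.} For $n=8$ ($m=4$), part (i) gives $\tau(C)\ge 104/3$, hence $\tau(C)\ge 35$. Nonnegativity of $\nu$ gives $\tau(C)\le 37$, and then $\nu(C)\le 37-35=2$. For $n=10$ ($m=5$), we get $\tau(C)\ge 170/3$, hence $\tau(C)\ge 57$, together with $\tau(C)\le 61$ and $\nu(C)\le 4$.

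The only step needing care is the nonnegativity $\nu(C)\ge 0$, equivalently $\tau(C)\le 3m^2-3m+1$. This is the known upper bound on the total Tjurina number (du Plessis--Wall), whose equality case characterizes free curves, and it is implicit in the cited definition of $\nu(C)$. I would cite it rather than reprove it. Everything else is arithmetic.
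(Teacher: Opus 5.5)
Your proposal is correct and follows essentially the same route as the paper. Parts (i) and (ii) come from $d_p\ge 0$, with $d_p=0$ at $A$-points, in Theorem \ref{thm:main}, and part (iii) comes from the bound $\tau(C)\le 3m^2-3m+1$ (which you phrase as $\nu(C)\ge 0$) together with $d_p\ge\frac34$ at the $D$ and $E$ points; your explicit checks of $d_p\ge\frac34$ and of the degree $8$ and $10$ numerics fill in details the paper leaves implicit.
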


Theorem \ref{thm:main}, the MK condition and the upper bound $\tau(C)\leq 3m(m-1)+1$ reduce the possible singularities of an MK-curve of degree $8$ to $18$ weak combinatorial types, see Proposition \ref{prop:octics}. A case by case analysis, based only on many beautiful classical facts on plane curves (the genus formula, B\'ezout's theorem, the Pl\"ucker formulas and projective duality), shows that none of them can be realized. Hence we get our main somehow surprising result.

\begin{theorem}\label{thm:octics}
There are no Miyaoka--Kobayashi curves of degree $8$.
\end{theorem}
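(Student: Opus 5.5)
The plan is to combine the two linear constraints available for an octic ($n=8$, $m=4$) and then rule out the finitely many surviving configurations by classical geometry. The constraints are the MK equality $\sum_p m(p)=\frac n2(5n-6)=136$ and the equality of Theorem \ref{thm:main}, $3\tau(C)-\sum_p d_p=2n^2-3n=104$. To these we add the upper bound $\tau(C)\le 3m(m-1)+1=37$ and Corollary \ref{cor:main}, which gives $35\le\tau(C)\le 37$ and at least one singularity of type $D$ or $E$.

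First I would rewrite the MK equality as $3\tau(C)+\sum_p(3-3/|G_p|)=136$. Here $|G_p|=k+1$ for $A_k$, $4(k-2)$ for $D_k$, and $24,48,120$ for $E_6,E_7,E_8$. Subtracting the derivative constraint leaves
$$\sum_p\Bigl(3-\frac3{|G_p|}+d_p\Bigr)=32.$$
Each summand lies in $[3/2,\,3+9/4)$, so the number $s$ of singular points satisfies roughly $11\le s\le 21$. Combined with $\tau(C)\in\{35,36,37\}$ and $\sum_p d_p=3\tau(C)-104\in\{1,4,7\}$, this is a finite Diophantine problem. The rationality of the terms $3/(k+1)$ forces strong divisibility conditions, so enumerating (by computer or by hand, organized by the $D/E$ part, whose $d_p$ sum must be $1$, $4$ or $7$) gives the list of weak combinatorial types. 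This list is the 18 types of Proposition \ref{prop:octics}, which I take as given.

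Next I would exclude each type using classical facts. The main tools are:
\begin{enumerate}[label=(\alph*)]
\item the genus formula: the sum of the $\delta_p$ minus the number of branch-deficit terms must be at most $\binom{7}{2}$ plus the number of components minus one, which constrains the possible irreducible decompositions;
\item B\'ezout, applied to lines through two high-multiplicity points ($D$ and $E$ points have multiplicity $3$) and to conics through five such points, which limits how many triple points can exist;
\item the Pl\"ucker formula for the class, $n^\vee=n(n-1)-\sum_p(\mu_p+\mult_p-1)$ summed over components, which must be positive and, for each irreducible component, at least $2$ unless the component is a line;
\item projective duality, applied when the dual curve has small degree, so that the dual's own genus and Pl\"ucker constraints give a contradiction.
\end{enumerate}
For reducible candidates I would split $C$ into components and distribute the singularities, using that intersections between components contribute $A_{\rm odd}$ or $D$-type points in a controlled way.

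The main obstacle is this case analysis, in particular for the types with many $A_k$ points with large $k$ and a single $D_4$ or $E_6$. For those, genus and B\'ezout alone may not suffice. For such cases I would first try duality: compute the class via Pl\"ucker, show it forces a low-degree dual (for instance a rational component of small class), and then derive a contradiction from the dual's Pl\"ucker formulas, which count cusps and flexes.
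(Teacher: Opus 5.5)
Your reduction to the $18$ numerical types of Proposition \ref{prop:octics} is fine, and your toolbox (genus formula, B\'ezout, Pl\"ucker, duality) is the right one. But the proof of the theorem \emph{is} the exclusion of those $18$ types, and the proposal stops there: it lists tools without showing that they eliminate a single type. Three things are missing.

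First, the numerical constraint that does the work is not B\'ezout through triple points, as in your (b). It is B\'ezout between components, in the form of the intersection budget of Lemma \ref{lem:budget}. The quantity $\sum_{i<j}d_id_j$ is at most $\sum_pI_p$, read off from the branch structure of each singularity. It is also at least $(r-1)(n-r+1)+\binom{r-1}{2}$. Meanwhile the genus formula gives $r-\sum_ig_i=\sum_p\delta_p-20$. These two facts alone kill O1, N1, N2, N4, N10, T1 and T2. In every other case they pin down the decomposition: one component $B$ of degree $\geq3$ carries all singular branches, and the rest are lines or a conic. This is what makes $g(B)$ and $\kappa(B)$ computable.

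Second, your test (c), ``class $\geq2$'', excludes none of N3, N5--N9, N11. There $e^\vee=2e+2g-2-\kappa(B)$ equals $2$, $3$ or $4$. What is needed is the class formula applied to $B^\vee$, i.e.\ $0\leq\kappa(B^\vee)=2e^\vee+2g-2-e\leq\delta(B^\vee)$. Your (d) points this way, but it only becomes an argument once $B$, $g$ and $\kappa(B)$ are fixed as above.

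Third, and most seriously, every numerical test passes for T3 with $(3,1,1,1,1,1)$, T4 with $(6,1,1)$, T5 with $(4,1,1,1,1)$ and T6 with $C$ irreducible of genus $1$. For these you give nothing, and genuinely geometric input is required.
\begin{itemize}
\item T3: the equality case of the budget forces the local branches at each singular point onto distinct components. So the cubic is cuspidal and each $A_5$ point makes a line an inflectional tangent. This contradicts the fact that a cuspidal cubic has a single flex.
\item T4--T6: a line tangent to $B$ at two points, or the common tangent of the two branches at an $A_5$ or $A_3$ point, produces a point of $B^\vee$ with at least two branches of controlled intersection multiplicity. This is Lemma \ref{lem:duality}(b),(c), the latter by a Legendre-transform computation. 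Then $\delta(B^\vee)\geq\kappa(B^\vee)+\sum(\beta_i\cdot\beta_j)$ exceeds $p_a(e^\vee)-g$.
\end{itemize}
Your guess about where the difficulty lies is also off. The types with large $A_k$ (T1 with $A_{11}$, T2 with $2A_7$) fall immediately to the budget count. The four hard types each contain a single $D_5$ and only $A_k$ with $k\leq5$.
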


In degree $10$ the same approach leads us to $284$ numerical types, see Proposition \ref{prop:decics}. This is way too much for a case by case analysis and we do not know how to approach this problem effectively, but we can at least prove the following result.

\begin{theorem}\label{thm:decics}
Let $C$ be an MK-curve of degree $10$. Then $57\leq\tau(C)\leq 60$. In particular, $C$ is not a free curve and $1\leq\nu(C)\leq 4$. Moreover, $C$ has at least one singularity of type $D$ or $E$.
\end{theorem}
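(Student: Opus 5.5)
Corollary~\ref{cor:main} already gives $57\leq\tau(C)\leq 61$ and $\nu(C)\leq 4$ for an MK-curve of degree $10$ (so $m=5$ and $3m^2-3m+1=61$). The new content of Theorem~\ref{thm:decics} is therefore the exclusion of the extremal case $\tau(C)=61$, i.e.\ of free MK-decics, together with the existence of a $D$ or $E$ point. Once $\tau=61$ is ruled out, $\nu(C)=61-\tau(C)\geq 1$ follows from the formula $\nu(C)=3m^2-3m+1-\tau(C)$, so $C$ is not free. For the last claim, note that $n=10$ is not divisible by $3$, so Corollary~\ref{cor:main}(ii) applies. Concretely, if all points were of type $A$, Theorem~\ref{thm:main} would give $3\tau(C)=2n^2-3n=170$, which is impossible. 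So the proof reduces to showing that $\tau(C)=61$ is incompatible with the MK condition and Theorem~\ref{thm:main}.

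To exclude $\tau(C)=61$, I use the two linear identities. Theorem~\ref{thm:main} gives $\sum_p d_p=3\cdot 61-170=13$. The MK condition gives $\sum_p m(p)=\frac{10}{2}\cdot 44=220$, where $m(p)=3(\mu_p+1-1/|G_p|)$. Writing $\sum_p m(p)=3\tau+3s-3\sum_p 1/|G_p|$, with $s$ the number of singular points, yields
$$s-\sum_p\frac1{|G_p|}=\frac{220-183}{3}=\frac{37}{3}.$$
Recall $|G_p|=k+1$ for $A_k$, $4(k-2)$ for $D_k$, and $24,48,120$ for $E_6,E_7,E_8$. Each point contributes $1-1/|G_p|\in[\tfrac12,1)$ to the left side, so $13\leq s\leq 24$. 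Also each $D/E$ point has $d_p\geq \tfrac34$ and $d_p<\tfrac32$ for $D_k$, with $\sum d_p=13$, so the number of $D/E$ points is between $9$ and $17$.

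I then enumerate the finitely many solutions, as in Proposition~\ref{prop:decics}, restricted to $\tau=61$: choose multisets of ADE types with $\sum\tau_p=61$, $\sum d_p=13$ and $\sum(1-1/|G_p|)=37/3$. The denominators are very restrictive. The $d_p$ values $\frac{3(k-3)}{2(k-2)}$ and $\frac32,\frac{15}8,\frac94$ must sum to the integer $13$, and the $1/|G_p|$ must sum to $s-37/3$, which forces $3$-adic contributions from $A_{3j-1}$, $D_k$ with $3\mid k-2$, or $E$ points.

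Either this enumeration is empty, or the few surviving configurations are killed by classical constraints. The first such constraint is the genus bound: $\sum\delta_p\leq\binom{9}{2}+r-1$ with $r$ the number of components, where $\delta$ is computed from $\mu$ and the branch count. The second is the freeness theory: a free curve has exponents $(d_1,d_2)$ with $d_1+d_2=9$ and $d_1^2+d_1d_2+d_2^2-9\cdot$ stuff equal to $\tau$. For $\tau=61=3m^2-3m+1$ this forces $d_1=d_2$, impossible since $9$ is odd. This last observation may in fact settle the free case directly: $\tau=(n-1)^2-d_1d_2$ with $d_1+d_2=n-1=9$ gives $81-d_1d_2=61$, so $d_1d_2=20$, i.e.\ $(4,5)$, and that is consistent. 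So the identity alone does not kill freeness, and the enumeration is genuinely needed.

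The main obstacle is making this finite enumeration airtight and then killing each survivor. For the survivors I would first try B\'ezout with lines and conics through the $D/E$ points, since $D$ and $E$ points have multiplicity $3$ and nine or more triple points on a decic impose strong line and conic restrictions. I would also check that the genus formula forces $C$ to be reducible in a way that B\'ezout between components contradicts.
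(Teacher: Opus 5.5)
Your reduction is correct and is the same as the paper's. Corollary~\ref{cor:main} gives $57\le\tau(C)\le 61$, and part (ii) gives a point of type $D$ or $E$, so everything hinges on excluding $\tau(C)=61$. But your proposal does not supply that exclusion.

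\textbf{The enumeration is not empty.} You only announce it, but by Proposition~\ref{prop:decics} there are exactly two numerical types with $\tau=61$, namely $6A_1+3D_4+D_5+4E_6+2E_7$ and $8A_1+A_2+D_4+D_5+6E_7$. Your count ``between $9$ and $17$ points of type $D$ or $E$'' is also wrong. Its lower half uses $d_p<\frac32$, which fails for $E_7$ and $E_8$ ($d_p=\frac{15}{8},\frac94$), and the second type has only eight such points.

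\textbf{Your proposed tools do not exclude these two types.}
\begin{itemize}
\item The genus formula and B\'ezout between components give $r-\sum_i g_i=3$, resp.\ $4$. They leave the decompositions $(8,1,1)$, $(7,2,1)$, resp.\ $(7,1,1,1)$, $(6,2,1,1)$ and $(6,1,1,1,1)$ with a sextic of genus $1$, all compatible with these constraints.
\item The exponents $(4,5)$ are compatible with freeness, as you noticed yourself.
\item B\'ezout with auxiliary lines or conics through the triple points yields nothing without collinearity information: a conic through five triple points meets $C$ with multiplicity $15<20$.
\end{itemize}

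\textbf{The missing idea} is the class formula \eqref{eq:class} together with projective duality. Apply them to the unique component $B$ of degree $\ge 3$, which must carry all singular branches.
\begin{itemize}
\item For the first type, $\kappa(B)=11$. A rational octic would have class $3$ and $\kappa(B^\vee)=-4$, and a rational septic would have class $1$; both contradict Lemma~\ref{lem:pluecker}.
\item For the second type, $\kappa(B)=8$. The decompositions $(7,1,1,1)$ and $(6,2,1,1)$ give $\kappa(B^\vee)=-1$ and $-4$ respectively.
\item The genus-one sextic needs a genuine duality argument. $B^\vee$ is a quartic with $\delta(B^\vee)=\kappa(B^\vee)=2$, so by \eqref{eq:dualdelta} it has no point with two branches. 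On the other hand, $\delta(B)=9$ forces the smooth branch of each $E_7$ point onto a line, which is then the tangent to $B$ at that cusp. Six such cusps distributed over four lines give a line $L$ tangent to $B$ at two distinct cusps. By Lemma~\ref{lem:duality}(b), $L^\vee$ is then a point of $B^\vee$ with two branches, a contradiction.
\end{itemize}
Without an argument of this kind, your proposal does not prove the theorem.
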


Our proofs are based on small amount of symbolic computations, just at two places, namely for the enumeration of the numerical types in Propositions \ref{prop:octics} and \ref{prop:decics}. In Appendix \ref{app:scripts} we give simple \texttt{SINGULAR} scripts performing these enumerations, together with a script recomputing the numerical data in Table \ref{tab:octics}. All the other arguments are done by hand.

Let us present the plan for our paper. In Section \ref{sec:main} we explain how \eqref{eq:Ko} fits into Langer's inequality and prove Theorem \ref{thm:main} and Corollary \ref{cor:main}. Section \ref{sec:facts} collects the classical facts on plane curves that we need, and Sections \ref{sec:octics} and \ref{sec:decics} contain the proofs of Theorems \ref{thm:octics} and \ref{thm:decics}. Appendix \ref{app:eorb} collects the local orbifold Euler numbers of the ADE singularities, and Appendix \ref{app:scripts} contains the computer scripts mentioned above.

We work only in the projective setting over the complex numbers.

\section{Kobayashi's inequality and Langer's inequality}\label{sec:main}

Any ADE curve singularity $p$ has a weighted homogeneous normal form $f$, of weighted degree $1$ with respect to weights $w_1\geq w_2$, see Table \ref{tab:local}. Its log canonical threshold is 
$$\lct_p=\min(1,w_1+w_2)>\frac{1}{2},$$ see \cite[Proposition~2.3]{DimSer}.

\begin{table}[ht]
\centering\small
\renewcommand{\arraystretch}{1.4}
\begin{tabular}{@{}lllllll@{}}
\toprule
type & normal form & $\mu_p$ & $|G_p|$ & $(w_1,w_2)$ & $\alpha_p$ & $c_p$\\
\midrule
$A_k$, $k\geq1$ & $x^2+y^{k+1}$ & $k$ & $k+1$ & $(\frac12,\frac1{k+1})$ & $\frac12-\frac1{k+1}$ & $3k$\\
$D_k$, $k\geq4$ & $x^2y+y^{k-1}$ & $k$ & $4(k-2)$ & $\bigg(\frac{k-2}{2(k-1)},\frac1{k-1}\bigg)$ & $\frac{k-4}{2(k-3)}$ & $\frac{3(k-1)(2k-3)}{2(k-2)}$\\
$E_6$ & $x^3+y^4$ & $6$ & $24$ & $(\frac13,\frac14)$ & $\frac1{12}$ & $\frac{33}{2}$\\
$E_7$ & $x^3+xy^3$ & $7$ & $48$ & $(\frac13,\frac29)$ & $\frac19$ & $\frac{153}{8}$\\
$E_8$ & $x^3+y^5$ & $8$ & $120$ & $(\frac13,\frac15)$ & $\frac2{15}$ & $\frac{87}{4}$\\
\bottomrule
\end{tabular}
\caption{Local data of ADE singularities.}
\label{tab:local}
\end{table}

We use Langer's inequality \cite[Corollary 5.2]{La} for $X=\PP^2$ and the $\mathbb{Q}$-divisor $\alpha C$, in the form \cite[(11.1.1)]{La}, see also \cite[Theorem 5.3]{DP}. Note that $K_{\PP^2}+\alpha C$ is pseudoeffective exactly when $\alpha\geq\frac3n$.

\begin{theorem}[Langer]\label{thm:Langer}
Let $C\subset\PP^2$ be a reduced curve of degree $n$ and let $\alpha\in[\frac3n,1]$ be such that the pair $(\PP^2,\alpha C)$ is log canonical, i.e.\ $\alpha\leq\lct_{p}(C)$ for all $p\in\Sing(C)$. Then
$$\sum_{p\in\Sing(C)}3\Bigl(\alpha(\mu_p-1)+1-\eorb(p,\alpha)\Bigr)\leq(3\alpha-\alpha^2)n^2-3\alpha n,$$
where $\eorb(p,\alpha)=\eorb(p;\PP^2,\alpha C)$ is the local orbifold Euler number of the pair $(\PP^2,\alpha C)$ at $p$.
\end{theorem}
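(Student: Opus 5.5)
The plan is to obtain Theorem \ref{thm:Langer} by specializing Langer's general orbifold Bogomolov--Miyaoka--Yau inequality \cite[Corollary 5.2]{La}, in the form \cite[(11.1.1)]{La}. For a log canonical pair $(X,D)$ on a smooth projective surface with $K_X+D$ pseudoeffective, that inequality reads
$$3\,\eorb(X,D)\geq (K_X+D)^2.$$
We apply it to $X=\PP^2$ and $D=\alpha C$. The hypotheses are met as follows.
\begin{enumerate}[label=(\alph*)]
\item The pair $(\PP^2,\alpha C)$ is log canonical by the assumption $\alpha\leq\lct_p(C)$ at every singular point; at smooth points of $C$ this only needs $\alpha\leq 1$.
\item $K_{\PP^2}+\alpha C\equiv(\alpha n-3)H$ is pseudoeffective exactly when $\alpha\geq 3/n$.
\end{enumerate}
The remaining work is to write both sides explicitly in terms of $n$, $\alpha$ and the local data of $C$.

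For the right-hand side, $(K_{\PP^2}+\alpha C)^2=(\alpha n-3)^2=\alpha^2n^2-6\alpha n+9$.

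For the left-hand side, we stratify $\PP^2$ into $\PP^2\setminus C$, the smooth locus $C\setminus\Sing(C)$, and the $s=|\Sing(C)|$ singular points. The orbifold Euler number gives weight $1$ to the open stratum and weight $1-\alpha$ to the curve stratum, since the coefficient of $C$ is $\alpha$. Each singular point contributes its local number $\eorb(p,\alpha)$. Hence
$$\eorb(\PP^2,\alpha C)=\bigl(3-e(C)\bigr)+(1-\alpha)\bigl(e(C)-s\bigr)+\sum_p\eorb(p,\alpha).$$

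Next we need the topological Euler number of a reduced plane curve of degree $n$. Comparing with a smooth curve of the same degree, each singular point changes the Euler number by $\mu_p$, so
$$e(C)=3n-n^2+\sum_p\mu_p .$$
Substituting gives
$$\eorb(\PP^2,\alpha C)=3-3\alpha n+\alpha n^2-\sum_p\Bigl(\alpha(\mu_p-1)+1-\eorb(p,\alpha)\Bigr).$$
Multiplying by $3$, comparing with $\alpha^2n^2-6\alpha n+9$, and cancelling the constant $9$ and part of the linear term yields exactly
$$3\sum_p\Bigl(\alpha(\mu_p-1)+1-\eorb(p,\alpha)\Bigr)\leq(3\alpha-\alpha^2)n^2-3\alpha n .$$

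The only delicate point is matching conventions with \cite{La}: the definition of the global and local orbifold Euler numbers, and the requirement that the boundary coefficient lie in $[0,1]$. I would check this normalization on a simple test case, for instance a smooth $C$, where the inequality must reduce to the log BMY inequality for a smooth curve. The rest of the argument is routine bookkeeping.
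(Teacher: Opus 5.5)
Your proposal is correct and follows the paper's route. The paper gives no separate proof: it applies Langer's Corollary 5.2 to $(\PP^2,\alpha C)$, in the specialized form (11.1.1), and notes that $K_{\PP^2}+\alpha C=(\alpha n-3)H$ is pseudoeffective (equivalently nef on $\PP^2$) exactly when $\alpha\geq\frac3n$. Your stratified computation of $\eorb(\PP^2,\alpha C)$, using $e(C)=3n-n^2+\sum_p\mu_p$ and comparing with $(\alpha n-3)^2$, is exactly the bookkeeping that turns the general inequality $3\,\eorb(X,D)\geq(K_X+D)^2$ into that form.
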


For the ADE singularities we only need values of $\eorb(p,\alpha)$ for $\alpha$ being close to $\frac12$. The values for all $0\leq\alpha\leq\lct_p$ are collected in Table \ref{tab:eorb} in Appendix \ref{app:eorb}.

\begin{lemma}\label{lem:eorb}
Let $p$ be an ADE singularity with weights $(w_1,w_2)$. Then
$$\eorb(p,\alpha)=\frac{(w_1+w_2-\alpha)^2}{4w_1w_2}\quad\text{ for }\ \alpha_p\leq\alpha\leq\lct_p,$$
where $\alpha_p<\frac12$ is given in Table \ref{tab:local}. In particular, $\eorb(p,\frac12)=\frac1{2|G_p|}$.
\end{lemma}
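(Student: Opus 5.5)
The plan is to compute $\eorb(p,\alpha)$ from a weighted blow-up adapted to the normal form in Table \ref{tab:local}, and then use Langer's description of the local orbifold Euler number of a log canonical pair obtained by contracting a single curve \cite{La}.

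Since $\eorb$ is analytic-local and the ADE singularities are quasi-homogeneous, we may assume that $C$ is given near $p$ by the normal form $f$. Write $w_1=a/N$, $w_2=b/N$ with $a,b$ coprime positive integers, so that $f$ is $(a,b)$-homogeneous of degree $N$. Let $\pi\colon Y\to\CC^2$ be the $(a,b)$-weighted blow-up. Its exceptional curve is $E\cong\PP^1$, and $Y$ has cyclic quotient points of orders $a$ and $b$ on $E$. We have $K_Y=\pi^*K+(a+b-1)E$, $\pi^*C=C'+NE$ and $E^2=-1/(ab)$. Hence $K_Y+\alpha C'+cE=\pi^*(K+\alpha C)$ with
$$c=1-(a+b)+\alpha N, \qquad 1-c=N(w_1+w_2-\alpha)\geq 0$$
for $\alpha\leq\lct_p$. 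The strict transform $C'$ meets $E$ transversally in a few explicit points, possibly at the quotient points; this is read off from the normal forms case by case, and for $E_7$ and $D_k$ the branch $x=0$ resp.\ $y=0$ passes through a quotient point.

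Next I will use Langer's formula for a contraction $\pi$ of one curve with $K_Y+\Delta=\pi^*(K+D)$. It expresses $\eorb(p)$ through the orbifold data on $E$: the sum of the local orbifold Euler numbers at the special points of $E$, corrected by the self-intersection. The hypothesis $\alpha\geq\alpha_p$ is exactly what makes the pair $(Y,\Delta)$ near $E$ have only ``standard'' orbifold points, with the expected local contributions. Concretely, it guarantees that at each point where $C'$ meets $E$, possibly at a quotient point, the local pair is lc of the simple type whose $\eorb$ is given by the usual formula for a quotient singularity with one or two smooth branches with coefficients. Identifying $\alpha_p$ as the threshold below which some such local contribution changes formula (for instance when $c$ becomes negative, i.e.\ $\alpha<(a+b-1)/N$) is the main obstacle: it has to be checked separately for $A_k$, $D_k$ and $E_6,E_7,E_8$.

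Once the local contributions are inserted, the sum should collapse to
$$\eorb(p,\alpha)=\frac{\deg(K_E+{\rm Diff}_E)^2}{4\,|E^2|}=\frac{(1-c)^2E^2\cdot E^2}{4|E^2|}=\frac{(1-c)^2}{4ab}.$$
Here I use $(K_Y+\Delta)\cdot E=0$, which gives $\deg(K_E+{\rm Diff}_E)=(1-c)E^2$. Substituting $1-c=N(w_1+w_2-\alpha)$ and $ab=N^2w_1w_2$ yields $(w_1+w_2-\alpha)^2/(4w_1w_2)$.

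As a consistency check, the formula equals $1$ at $\alpha=0$ only in the smooth case, and it vanishes at $\alpha=w_1+w_2$ when that is the lct. The final claim then follows by substituting $\alpha=\frac12$ and the table entries. For $A_k$ one gets $(1/(k+1))^2/(2/(k+1))=1/(2(k+1))$. For $D_k$ one gets $(1/(2(k-1)))^2\cdot (k-1)^2/(k-2)=1/(4(k-2))\cdot\frac12\cdot 2=1/(8(k-2))$. For $E_6,E_7,E_8$ one gets $1/48,1/96,1/240$ respectively. In each case this is $1/(2|G_p|)$.
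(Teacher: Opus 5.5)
Your set-up is the right one and follows the paper's route. The paper simply invokes \cite[Theorem 9.4.2]{La} for the star-shaped minimal log resolution, and your $(a,b)$-weighted blow-up is that resolution with its arms contracted to the cyclic quotient points of $E$. The formulas for $c$ and $E^2$, the identity $\deg(K_E+\mathrm{Diff}_E)=(1-c)E^2$ and the final substitutions are correct. (In the $D_k$ check the factor is $\frac{1}{4w_1w_2}=\frac{(k-1)^2}{2(k-2)}$; your intermediate steps are garbled, but $\frac1{8(k-2)}$ is right.)

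The gap is the step you defer, and your diagnosis of it is wrong. The local picture along $E$ does not depend on $\alpha$: for every $0\le\alpha\le\lct_p$, each special point $q\in E$ is a cyclic quotient point of order $m_q\geq1$ carrying at most one smooth branch of $C'$ transverse to $E$. So no local condition at these points can produce $\alpha_p$. Yet the quadratic formula really fails below $\alpha_p$: at $\alpha=0$ it gives $\frac{(w_1+w_2)^2}{4w_1w_2}>1=\eorb(p,0)$ unless $w_1=w_2$, i.e.\ unless $p$ is $A_1$ or $D_4$, exactly the types with $\alpha_p=0$. This is the correct form of your ``smooth case'' remark. Nor is the sign of $c$ the criterion. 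For $A_1$ the formula holds on all of $[0,1]$ although $c=2\alpha-1<0$ on $[0,\frac12)$. For $A_{2k}$ one has $c=-1$ at $\alpha=\frac12$, so a hypothesis $c\geq0$ would exclude the cusps at precisely the value of $\alpha$ the paper needs.

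What separates the two regimes of Table~\ref{tab:eorb} is a global balancing condition on the orbifold curve $(E,\mathrm{Diff}_E(\alpha C'))$. Write $\mathrm{Diff}_E(\alpha C')=\sum_q\beta_q\,q$ with $\beta_q=1-\frac{1-\alpha_q}{m_q}$, where $\alpha_q\in\{0,\alpha\}$ is the coefficient of the branch of $C'$ through $q$, if any. For the ADE types, the value $\frac{\deg(K_E+\mathrm{Diff}_E)^2}{4|E^2|}$ is the correct one precisely when $\beta_q\leq\sum_{q'\neq q}\beta_{q'}$ for every $q$. This is the same inequality as in Troyanov's criterion for spherical cone metrics on $\PP^1$, and it, not a local statement, is what has to be checked when you apply the star-shaped formula. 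Using your own description of $C'\cap E$, it gives exactly $\alpha\geq\alpha_p$. The binding inequalities are:
\begin{itemize}
\item $\frac{2k}{2k+1}\leq\frac12+\alpha$ for $A_{2k}$;
\item $\frac{k}{k+1}\leq2\alpha$ for $A_{2k+1}$;
\item $1-\frac{1-\alpha}{k-1}\leq2\alpha$ for $D_{2k}$;
\item $1-\frac{1-\alpha}{2k-1}\leq\frac12+\alpha$ for $D_{2k+1}$;
\item $\frac34\leq\frac23+\alpha$, $\frac23\leq\frac{1+\alpha}{2}+\alpha$ and $\frac45\leq\frac23+\alpha$ for $E_6,E_7,E_8$.
\end{itemize}
Once this is checked, your argument reduces to the cited theorem exactly as in the paper.

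The value at $\alpha=\frac12$ alone can also be obtained independently, as in the paper. Use the double cover branched along $C$, together with multiplicativity of $\eorb$ under finite maps and $\eorb=1/|G_p|$ at $\CC^2/G_p$.
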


\begin{proof}
This is \cite[Theorem 9.4.2]{La} applied to the minimal log resolution of $(\CC^2,C)$, which is star-shaped for all ADE singularities, see also \cite{Bo}. For $A_1$ and $D_4$ it also follows from \cite[Theorem 8.3]{La}, and the cusp $A_2$ is treated in \cite[11.2]{La}. The value at $\alpha=\frac12$ is obtained by substituting the weights from Table \ref{tab:local}. It can also be seen directly: the double cover $Y$ of $\CC^2$ branched along $C$ satisfies $K_Y=\pi^*(K_{\CC^2}+\frac12C)$ and has the quotient singularity $\CC^2/G_p$ over $p$, so \cite[(0.2.2) and Lemma 7.1]{La} give $\eorb(p,\frac12)=\frac12\cdot\frac1{|G_p|}$.
\end{proof}

Let $\Phi(\alpha)$ be the right-hand side minus the left-hand side of the inequality in Theorem \ref{thm:Langer}, so that $\Phi(\alpha)\geq0$ for $\frac3n\leq\alpha\leq\min_p\lct_p$. For $\alpha=\frac12$, Lemma \ref{lem:eorb} gives
$$\Phi\bigl(\tfrac12\bigr)=\frac54n^2-\frac32n-\frac32\sum_{p\in\Sing(C)}\Bigl(\mu_p+1-\frac1{|G_p|}\Bigr).$$
Hence, for $n\geq6$, Langer's inequality for $\alpha=\frac12$ is exactly Kobayashi's inequality \eqref{eq:Ko}, and a curve $C$ of even degree is an MK-curve if and only if $\Phi(\frac12)=0$.

\begin{proof}[Proof of Theorem \ref{thm:main}]
Since $n\geq8$, we have $\frac3n<\frac12<\min_p\lct_p$, and by Lemma \ref{lem:eorb} the function $\Phi$ is a polynomial in $\alpha$ on a neighbourhood of $\frac12$. As $\Phi\geq0$ and $\Phi(\frac12)=0$, we get $\Phi'(\frac12)=0$. The derivative of the right-hand side of Langer's inequality at $\frac12$ is $2n^2-3n$, and by Lemma \ref{lem:eorb} the derivative of the term corresponding to $p$ on the left-hand side is
$$3(\mu_p-1)+\frac{3(w_1+w_2-\frac12)}{2w_1w_2}=c_p,$$
with the values given in Table \ref{tab:local}.
\end{proof}

\begin{proof}[Proof of Corollary \ref{cor:main}]
Since $d_p\geq0$, Theorem \ref{thm:main} gives $3\tau(C)\geq 2n^2-3n=8m^2-6m$, which is (i). If all the singularities of $C$ are of type $A$, then $d_p=0$ for all $p$, and $3\tau(C)=8m^2-6m$ implies that $3$ divides $m$, hence also $n$. This proves (ii). For (iii), recall that $\tau(C)\leq3m(m-1)+1$, see the proof of \cite[Theorem 3.4]{DP}, which relies on \cite{dPW} and \cite[Theorem 2.9]{DP2}. Hence Theorem \ref{thm:main} gives $\sum_pd_p\leq m^2-3m+3$, and the claim follows from $d_p\geq\frac34$ for the points of type $D$ and $E$.
\end{proof}

\begin{remark}\label{rem:checks}
\begin{enumerate}[label=(\arabic*)]
\item For $n=6$ one has $\frac3n=\frac12$, so the argument above only gives $\Phi'(\frac12)\geq0$, i.e. $\sum_pc_p\leq54$. Nevertheless, the three types of MK-sextics described in \cite[Definition 3.8]{DP} all satisfy the condition $\sum_pc_p=54=2n^2-3n$:
$$6c(A_1)+4c(A_3)=18+36,\qquad 9c(A_2)=54,\qquad 3c(A_1)+4c(D_4)=9+45.$$
\item In \cite[Section 5]{DP}, the cases (b) and (c) for octics with singularities of types $A_1,A_2,A_3,D_4$ were excluded using $\alpha=0.55$ and $\alpha=0.48$, respectively. This choice is explained by Theorem \ref{thm:main}: one has $\Phi'(\frac12)=104-\sum_pc_p=-1$ in case (b) and $\Phi'(\frac12)=\frac12$ in case (c).
\item Corollary \ref{cor:main}(ii) recovers \cite[Corollaries 5.10 and 5.11]{DP} when $3$ does not divide $n$.
\item Kobayashi's inequality \eqref{eq:Ko} holds also for odd $n\geq7$, and the proof of Theorem \ref{thm:main} shows that equality in \eqref{eq:Ko} implies $\sum_pc_p=2n^2-3n$ for any $n\geq7$.
\end{enumerate}
\end{remark}

\section{Some classical facts on plane curves}\label{sec:facts}

Let $C=C_1\cup\ldots\cup C_r$ be the decomposition of a reduced plane curve $C$ of degree $n$ into $r$ irreducible components. We set $d_i=\deg C_i$, we let $g_i$ be the geometric genus of $C_i$, and we put $p_a(e)=\frac{(e-1)(e-2)}2$. For a singular point $p$ of $C$ we denote by $\delta_p$ its $\delta$-invariant. We let $I_p$ be the sum of the intersection multiplicities of all pairs of distinct local branches of $C$ at $p$, and $\kappa_p$ the sum of $\mult(\beta)-1$ over all local branches $\beta$ of $C$ at $p$. Table \ref{tab:branches} describes the local branches of the ADE singularities.

\begin{table}[ht]
\centering\small
\renewcommand{\arraystretch}{1.3}
\begin{tabular}{@{}llllll@{}}
\toprule
type & local branches & intersection numbers & $\delta_p$ & $I_p$ & $\kappa_p$\\
\midrule
$A_{2k-1}$ & two smooth branches & $k$ & $k$ & $k$ & $0$\\
$A_{2k}$ & one branch of multiplicity $2$ & --- & $k$ & $0$ & $1$\\
$D_{2k}$, $k\geq2$ & smooth $L,B_1,B_2$ & $L\cdot B_i=1$, $B_1\cdot B_2=k-1$ & $k+1$ & $k+1$ & $0$\\
$D_{2k+1}$, $k\geq2$ & smooth $L$, $S$ of type $A_{2k-2}$ & $L\cdot S=2$ & $k+1$ & $2$ & $1$\\
$E_6$ & one branch of multiplicity $3$ & --- & $3$ & $0$ & $2$\\
$E_7$ & smooth $L$, a cusp $S$ tangent to $L$ & $L\cdot S=3$ & $4$ & $3$ & $1$\\
$E_8$ & one branch of multiplicity $3$ & --- & $4$ & $0$ & $2$\\
\bottomrule
\end{tabular}
\caption{Local branches of the ADE singularities.}
\label{tab:branches}
\end{table}
The following two lemmas are well-known.
\begin{lemma}\label{lem:genus}
With the above notation,
$$\sum_{p\in\Sing(C)}\delta_p=p_a(n)+r-1-\sum_{i=1}^rg_i.$$
In particular, $r\geq\sum_p\delta_p-p_a(n)+1$.
\end{lemma}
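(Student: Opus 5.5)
We prove Lemma \ref{lem:genus} by normalizing $C$ and comparing arithmetic and geometric genera. Let $\nu\colon\widetilde C=\bigsqcup_{i=1}^r\widetilde C_i\to C$ be the normalization, where $\widetilde C_i$ is the smooth projective model of $C_i$, of genus $g_i$. From the exact sequence $0\to\mathcal O_C\to\nu_*\mathcal O_{\widetilde C}\to\bigoplus_p\mathcal O_{\widetilde C,p}/\mathcal O_{C,p}\to0$, whose cokernel is a skyscraper sheaf of total length $\sum_p\delta_p$ by the definition of the $\delta$-invariant, taking Euler characteristics gives
$$\chi(\mathcal O_{\widetilde C})=\chi(\mathcal O_C)+\sum_{p\in\Sing(C)}\delta_p .$$
Here the sum is over all singular points of $C$, including the intersection points of distinct components, and $\delta_p$ is the invariant of the full germ of $C$ at $p$.

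We compute both Euler characteristics. Since $\widetilde C$ is a disjoint union of $r$ smooth curves, $\chi(\mathcal O_{\widetilde C})=\sum_{i=1}^r(1-g_i)=r-\sum_i g_i$. Since $C$ is a plane curve of degree $n$, the sequence $0\to\mathcal O_{\PP^2}(-n)\to\mathcal O_{\PP^2}\to\mathcal O_C\to0$ together with $h^1(\mathcal O_{\PP^2}(-n))=0$ and $h^2(\mathcal O_{\PP^2}(-n))=h^0(\mathcal O_{\PP^2}(n-3))=\binom{n-1}{2}$ gives $\chi(\mathcal O_C)=1-p_a(n)$. Substituting, we get $r-\sum_ig_i=1-p_a(n)+\sum_p\delta_p$, which rearranges to the stated formula.

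The inequality then follows from $g_i\geq0$: we have $r-1=\sum_p\delta_p-p_a(n)+\sum_ig_i\geq\sum_p\delta_p-p_a(n)$. There is no real obstacle in this argument. The only point needing care is the bookkeeping that $\delta_p$ counts the whole germ at each point, so that contributions from intersections of different components (which would appear as $C_i\cdot C_j$ terms in a component-by-component version) are already included in $\sum_p\delta_p$.
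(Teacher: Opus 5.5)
Your proof is correct, but it follows a different route from the paper's.

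The paper works component by component. It combines three ingredients:
\begin{itemize}
\item the genus formula $\sum_{p}\delta(C_i,p)=p_a(d_i)-g_i$ for each irreducible $C_i$;
\item B\'ezout's theorem for each pair $C_i,C_j$;
\item the local identity $\delta_p=\sum_i\delta(C_i,p)+\sum_{i<j}(C_i\cdot C_j)_p$, which it derives from Milnor's formula $2\delta=\mu+b-1$ and the formula for the Milnor number of a union of germs.
\end{itemize}
The result then follows from the identity $2\sum_ip_a(d_i)+2\sum_{i<j}d_id_j=n^2-3n+2r$.

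You instead apply the normalization sequence directly to the reducible curve $C$. You compute $\chi(\mathcal O_C)=1-p_a(n)$ from $0\to\mathcal O_{\PP^2}(-n)\to\mathcal O_{\PP^2}\to\mathcal O_C\to0$. Your only implicit step, $\chi(\nu_*\mathcal O_{\widetilde C})=\chi(\mathcal O_{\widetilde C})$, holds because $\nu$ is finite.

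Your argument is shorter and avoids B\'ezout, Milnor's formula and the additivity of $\delta$ altogether. In effect it is the standard proof of the genus formula for an irreducible curve, run on the whole curve at once. It works verbatim for any reduced projective curve, with $p_a(n)$ replaced by the arithmetic genus.

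What the paper's longer route buys is the local decomposition of $\delta_p$ into contributions of components and their pairwise intersection multiplicities. The same argument gives the decomposition by branches. This is used repeatedly later, e.g.\ in \eqref{eq:dualdelta} and in the proof of Theorem \ref{thm:decics}, where $\delta(B)=9$ is split among the branches of $B$ at the $E_7$ points.
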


\begin{proof}
For a point $p\in\PP^2$, let $\delta(C_i,p)$ be the $\delta$-invariant of the germ $(C_i,p)$, so that $\delta(C_i,p)=0$ when $p\notin C_i$ or $C_i$ is smooth at $p$, and let $(C_i\cdot C_j)_p$ be the intersection multiplicity of $C_i$ and $C_j$ at $p$. We use the following three facts.
\begin{enumerate}[label=(\alph*)]
\item The genus formula for the irreducible plane curve $C_i$ of degree $d_i$:
$$\sum_{p\in C_i}\delta(C_i,p)=p_a(d_i)-g_i.$$
\item B\'ezout's theorem: for $1\leq i<j\leq r$ one has
$$\sum_{p\in C_i\cap C_j}(C_i\cdot C_j)_p=d_id_j.$$
\item For any point $p\in\PP^2$ one has
\begin{equation}\label{eq:deltalocal}
\delta_p=\sum_{i=1}^r\delta(C_i,p)+\sum_{1\leq i<j\leq r}(C_i\cdot C_j)_p.
\end{equation}
\end{enumerate}
To prove (c), consider first two plane curve germs $(X,p)$ and $(Y,p)$ without common irreducible components, and let $b(X,p)$ denote the number of local branches of $X$ at $p$. Milnor's formula $2\delta(X,p)=\mu(X,p)+b(X,p)-1$, see \cite[Theorem 10.5]{Mi}, together with the formula
$$\mu(X\cup Y,p)=\mu(X,p)+\mu(Y,p)+2(X\cdot Y)_p-1,$$
see \cite[Theorem 6.5.1]{Wa}, and the obvious equality $b(X\cup Y,p)=b(X,p)+b(Y,p)$, give
$$2\delta(X\cup Y,p)=\bigl(\mu(X,p)+b(X,p)-1\bigr)+\bigl(\mu(Y,p)+b(Y,p)-1\bigr)+2(X\cdot Y)_p,$$
that is
\begin{equation}\label{eq:deltaunion}
\delta(X\cup Y,p)=\delta(X,p)+\delta(Y,p)+(X\cdot Y)_p.
\end{equation}
Now let $p\in\PP^2$, and assume that $C_1,\dots,C_k$ are exactly the components of $C$ passing through $p$. All the terms in \eqref{eq:deltalocal} involving some $C_i$ with $i>k$ vanish, so \eqref{eq:deltalocal} is clear for $k\leq1$. For $k\geq2$, it follows by induction on $k$, applying \eqref{eq:deltaunion} to $X=C_1\cup\ldots\cup C_{k-1}$ and $Y=C_k$, and using the additivity $(X\cdot C_k)_p=\sum_{i<k}(C_i\cdot C_k)_p$ of intersection multiplicities.

Every singular point of a component $C_i$ and every point of $C_i\cap C_j$ with $i\neq j$ is a singular point of $C$, and both sides of \eqref{eq:deltalocal} vanish when $p$ is a smooth point of $C$. Hence summing \eqref{eq:deltalocal} over all $p\in\Sing(C)$ and using (a) and (b), we get
$$\sum_{p\in\Sing(C)}\delta_p=\sum_{i=1}^r\bigl(p_a(d_i)-g_i\bigr)+\sum_{1\leq i<j\leq r}d_id_j.$$
Finally, using $2p_a(e)=e^2-3e+2$ and $n=\sum_id_i$, a direct computation gives
$$2\sum_{i=1}^rp_a(d_i)+2\sum_{i<j}d_id_j=n^2-3n+2r=2\bigl(p_a(n)+r-1\bigr),$$
which proves the claimed formula. The last assertion follows from $g_i\geq0$.
\end{proof}

\begin{lemma}\label{lem:budget}
With the above notation,
$$\sum_{1\leq i<j\leq r}d_id_j\leq\sum_{p\in\Sing(C)}I_p,$$
with equality if and only if, at every singular point of $C$, the local branches of $C$ belong to pairwise distinct irreducible components. Moreover, the left-hand side is at least $(r-1)(n-r+1)+\binom{r-1}{2}$.
\end{lemma}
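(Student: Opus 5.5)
The plan is to split Lemma \ref{lem:budget} into its two assertions: first the inequality together with its equality case, then the lower bound on $\sum_{i<j}d_id_j$.

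For the inequality I will use B\'ezout's theorem, fact (b) in the proof of Lemma \ref{lem:genus}, which gives $\sum_{i<j}d_id_j=\sum_{i<j}\sum_{p\in C_i\cap C_j}(C_i\cdot C_j)_p$. Every point of $C_i\cap C_j$ with $i\neq j$ is a singular point of $C$, so this becomes
$$\sum_{i<j}d_id_j=\sum_{p\in\Sing(C)}\ \sum_{i<j}(C_i\cdot C_j)_p .$$
It then suffices to compare, at each fixed $p$, the inner sum with $I_p$. Locally each germ $(C_i,p)$ is a union of some of the local branches of $C$ at $p$, and these branch sets partition the set of all branches. By bilinearity of intersection multiplicity, $(C_i\cdot C_j)_p=\sum_{\beta\subset C_i,\ \gamma\subset C_j}(\beta\cdot\gamma)_p$. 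Hence the inner sum counts exactly the pairs of distinct branches lying on different components. By contrast, $I_p$ counts all pairs of distinct branches.

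The difference at $p$ is therefore the sum of $(\beta\cdot\gamma)_p$ over pairs of distinct branches lying on the same component. Each such term is at least $1$, since both branches pass through $p$. So the difference is nonnegative, and it vanishes at $p$ exactly when no two branches at $p$ lie on a common component. Summing over $p$ gives both the inequality and the characterization of equality.

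For the lower bound I will minimize $\sum_{i<j}d_id_j=\frac12\bigl(n^2-\sum_i d_i^2\bigr)$ over positive integers $d_1,\dots,d_r$ with $\sum_i d_i=n$. This amounts to maximizing $\sum_i d_i^2$ under that constraint. By the standard convexity (exchange) argument, replacing $(a,b)$ with $a\geq b\geq2$ by $(a+1,b-1)$ strictly increases $\sum_i d_i^2$. The maximum is therefore attained at $(n-r+1,1,\dots,1)$, where the value is $(n-r+1)^2+(r-1)$. Substituting gives
$$\sum_{i<j}d_id_j\geq\frac12\bigl(n^2-(n-r+1)^2-(r-1)\bigr).$$
Checking directly, the configuration $(n-r+1,1,\dots,1)$ contributes $(r-1)(n-r+1)$ from the pairs involving the large component and $\binom{r-1}{2}$ from the pairs of lines. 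This matches the claimed bound.

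I do not expect any serious obstacle. The only point needing care is the local bookkeeping that identifies $\sum_{i<j}(C_i\cdot C_j)_p$ with the cross-component part of $I_p$.
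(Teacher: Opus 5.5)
Your proof is correct and follows the same route as the paper: B\'ezout plus the branch-by-branch decomposition of $(C_i\cdot C_j)_p$, where the defect from $\sum_p I_p$ comes exactly from pairs of branches on the same component, and the lower bound is attained at the partition $(n-r+1,1,\dots,1)$. You also supply the details the paper leaves implicit, in particular that each same-component pair contributes at least $1$ and the exchange argument showing this partition maximizes $\sum_i d_i^2$.
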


\begin{proof}
The intersection number $C_i\cdot C_j$ is the sum of the intersection multiplicities of the local branches of $C_i$ with the local branches of $C_j$, all located at singular points of $C$. The lower bound corresponds to the partition $n=(n-r+1)+\underbrace{1+\dots+1}_{r-1}$.
\end{proof}

For $n=8$ and $r=2,\dots,7$ the lower bound in Lemma \ref{lem:budget} is $7,13,18,22,25,27$, and for $n=10$ and $r=2,\dots,6$ it is $9,17,24,30,35$. Note also that lines and conics are smooth curves, hence they carry no singular local branch, and they carry at most one local branch at any given point.

For an irreducible curve $B\subset\PP^2$ of degree $e\geq2$ and geometric genus $g$, let $\kappa(B)$ be the sum of $\mult(\beta)-1$ over all local branches $\beta$ of $B$. Then the class $e^\vee$ of $B$, i.e.\ the degree of the dual curve $B^\vee$, is given by
\begin{equation}\label{eq:class}
e^\vee=2e+2g-2-\kappa(B),
\end{equation}
see \cite[Section 9.1, formula (3) on p.\ 597]{BK}. This follows from the Riemann--Hurwitz formula for the projection of the normalization of $B$ from a general point, as in \cite[Section 9.2, proof of Lemma 4]{BK}, and it is equivalent to the formula $e^\vee=e(e-1)-\sum_p(\mu_p+\mult_p-1)$ used in \cite[Section 4]{DP}, see \cite[(1.2.18)]{Di} and \cite[Section 9.1, Theorem 2(i)]{BK}. Recall that $B^{\vee\vee}=B$, see \cite[Section 6.2, p.\ 255]{BK}, and that associating to a point of $B$ its tangent line gives a birational map $B\to B^\vee$, see \cite[Section 9.1, p.\ 597]{BK}. Hence $B^\vee$ has degree $e^\vee$, geometric genus $g$ and class $e$. Applying \eqref{eq:class} to $B^\vee$, as in \cite[Section 9.1, formulae (4) and (5) on p.\ 597]{BK}, we get the following.

\begin{lemma}[Pl\"ucker]\label{lem:pluecker}
Let $B\subset\PP^2$ be an irreducible curve of degree $e\geq2$ and geometric genus $g$, and let $e^\vee$ be its class. Then $e^\vee\geq2$ and
$$0\leq\kappa(B^\vee)=2e^\vee+2g-2-e\leq\delta(B^\vee)=p_a(e^\vee)-g.$$
\end{lemma}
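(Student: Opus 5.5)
The lemma follows by applying the class formula \eqref{eq:class} to the dual curve $B^\vee$ and comparing with the genus formula for $B^\vee$. The ingredients, all recalled above, are these: $B^\vee$ is irreducible; $B\to B^\vee$ is birational, so $B^\vee$ has geometric genus $g$; $B^{\vee\vee}=B$, so the class of $B^\vee$ is $e$.

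First we show $e^\vee\geq2$. If $e^\vee=1$, then $B^\vee$ would be a line, i.e.\ all tangent lines of $B$ would pass through one fixed point. Its dual $B^{\vee\vee}=B$ would then be a point, contradicting $e\geq2$. (Equivalently, the dual of a line is a point, not a curve.) Hence $e^\vee\geq2$, so \eqref{eq:class} may be applied to $B^\vee$. This gives
$$e=2e^\vee+2g-2-\kappa(B^\vee),$$
i.e.\ $\kappa(B^\vee)=2e^\vee+2g-2-e$. The inequality $\kappa(B^\vee)\geq0$ is clear, since $\kappa$ is a sum of the nonnegative terms $\mult(\beta)-1$.

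The remaining inequality is $\kappa(B^\vee)\leq\delta(B^\vee)$. Here $\delta(B^\vee)$ denotes the total $\delta$-invariant $\sum_q\delta(B^\vee,q)$, which equals $p_a(e^\vee)-g$ by the genus formula, Lemma \ref{lem:genus}(a), applied to $B^\vee$. It suffices to prove, at each point $q$, that
$$\sum_{\beta}\bigl(\mult(\beta)-1\bigr)\leq\delta(B^\vee,q),$$
where $\beta$ runs over the local branches of $B^\vee$ at $q$.

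For a single branch of multiplicity $m$, one has $\delta\geq m(m-1)/2\geq m-1$. This follows from the multiplicity estimate $\delta\geq\sum m_i(m_i-1)/2$ over infinitely near points, or from Milnor's formula together with $\mu\geq(m-1)^2$. For several branches, formula \eqref{eq:deltaunion} gives $\delta(X\cup Y,q)\geq\delta(X,q)+\delta(Y,q)$, so the local bound is additive over branches.

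The only step needing care is this local inequality $\mult-1\leq\delta$ for a branch. It is standard: since $\delta(\beta)\geq\binom{m}{2}$ and $\binom{m}{2}\geq m-1$ for $m\geq1$, no real obstacle arises.
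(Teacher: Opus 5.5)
Your proposal is correct and follows the paper's proof. You apply the class formula \eqref{eq:class} to $B^\vee$, using biduality and the birationality of $B\to B^\vee$, and then bound $\kappa(B^\vee)$ by $\delta(B^\vee)$ through $\delta\geq\binom{m}{2}\geq m-1$ for each branch, with the explicit checks of $e^\vee\geq2$ and of additivity over branches via \eqref{eq:deltaunion} filling in details the paper leaves implicit.
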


\begin{proof}
Apply \eqref{eq:class} to $B^\vee$, and use that a local branch of multiplicity $\mu$ has $\delta\geq\binom{\mu}{2}\geq\mu-1$.
\end{proof}

Under the birational map $B\to B^\vee$, each local branch $\beta$ of $B$, with tangent line $T$, corresponds to a local branch $\beta^\vee$ of $B^\vee$ centered at the point $T^\vee$, and the multiplicity of $\beta^\vee$ is the class of $\beta$, see \cite[Section 9.1, pp.\ 576--577]{BK}. We need the following facts about these dual branches.

\begin{lemma}\label{lem:duality}
Let $B\subset\PP^2$ be an irreducible curve of degree at least $2$ and let $B^\vee$ be its dual curve.
\begin{enumerate}[label=(\alph*)]
\item Let $\beta$ be a smooth local branch of $B$ at $p$, with tangent line $T$, and let $c=(T\cdot\beta)_p\geq2$. Then the dual branch $\beta^\vee$ of $B^\vee$ at the point $T^\vee$ has multiplicity $c-1$. In particular, $B$ has no inflection points if $\kappa(B^\vee)=0$.
\item If $t$ distinct local branches of $B$, smooth or not, have the same tangent line $L$, then $L^\vee$ is a point of $B^\vee$ with at least $t$ local branches. If two of these branches are centered at the same point $p$, the corresponding branches of $B^\vee$ have the same tangent line $p^\vee$ at $L^\vee$, so their intersection multiplicity is at least $2$.
\item If two smooth local branches of $B$ at $p$ have the same tangent line $T$, both have contact $2$ with $T$, and their intersection multiplicity is $k\geq3$, then the dual branches at $T^\vee$ are smooth and their intersection multiplicity is $k$.
\end{enumerate}
\end{lemma}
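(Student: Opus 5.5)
We prove the three parts of Lemma \ref{lem:duality} by local computations with parametrized branches, using the standard local description of duality. Choose affine coordinates so that the branch $\beta$ is centered at the origin with tangent line $T=\{y=0\}$, and parametrize it as $t\mapsto(x(t),y(t))$. The tangent line at the parameter $t$ is the line through $(x(t),y(t))$ with direction $(x'(t),y'(t))$. In dual affine coordinates $(u,v)$, where the line is $y=ux+v$, the dual branch is $u=y'/x'$, $v=y-xy'/x'$. This parametrization of $\beta^\vee$ is centered at $T^\vee=(0,0)$ and is birational onto its image, since the tangent map $B\to B^\vee$ is birational.

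For (a), take $x=t$ and $y=at^c+\dots$ with $a\neq0$, which is possible because $\beta$ is smooth with contact $c$ with $T$. Then $u=act^{c-1}+\dots$ and $v=(1-c)at^c+\dots$. Hence $\beta^\vee$ has multiplicity $\min(c-1,c)=c-1$. Its tangent line is $u=0$, which is the dual line $p^\vee$ of $p=(0,0)$, as it should be. For the consequence: an inflection point is a smooth branch with $c\geq3$, so it produces a dual branch of multiplicity $c-1\geq2$. This gives $\kappa(B^\vee)>0$, which proves the contrapositive. (Singular branches of $B$ are not inflections in this sense, so the statement only concerns smooth branches.)

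For (b), note that each of the $t$ branches is mapped by the tangent map to a local branch of $B^\vee$ centered at $L^\vee$. Distinct branches of $B$ give distinct branches of $B^\vee$, because the normalizations of $B$ and $B^\vee$ coincide via the birational map. So $L^\vee$ carries at least $t$ branches. For the second statement, we use the general fact, visible in the computation above and valid for singular branches too via a Puiseux parametrization $x=t^m$, $y=\sum a_it^i$, that the tangent line of $\beta^\vee$ at $T^\vee$ is $p^\vee$, where $p$ is the center of $\beta$. Indeed, $v+ux\cdot$(center) vanishes to higher order. Writing the center as $p$, the quantity $v+x(p)u-y(p)$, computed with $x=x(p)+\dots$, vanishes to order larger than the order of $u$. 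Consequently two branches centered at the same $p$ with the same tangent $L$ give two branches at $L^\vee$ with common tangent $p^\vee$. Two curve branches through a point with a common tangent have intersection multiplicity at least $\mult\cdot\mult+\dots\geq2$.

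For (c), write the two branches as $y=a t^2+\dots$ and $y=a t^2+\dots$ with $x=t$, both with $a\neq0$. Their intersection multiplicity $k$ is the order of vanishing of the difference $f_1(t)-f_2(t)$ of the two graph functions. By (a) with $c=2$, both dual branches are smooth, with $u=f_i'(t)=2at+\dots$ a local coordinate. The key step, and the main obstacle, is computing the intersection multiplicity of the dual branches. We express each dual branch as a graph $v=g_i(u)$ and need $\mathrm{ord}_u(g_1-g_2)=k$. Here $g_i$ is the negative Legendre transform of $f_i$: $g_i(u)=f_i(t)-tf_i'(t)$ with $u=f_i'(t)$, and $g_i'(u)=-t$. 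Thus $g_i'=-(f_i')^{-1}$. Since $f_1-f_2$ has order $k$, the derivative $f_1'-f_2'$ has order $k-1$. Because the $f_i'$ have nonzero linear term $2a$, their compositional inverses also differ to order exactly $k-1$. Hence $g_1'-g_2'$ has order $k-1$. As $g_1(0)=g_2(0)=0$, integrating gives $\mathrm{ord}(g_1-g_2)=k$, which is the claimed intersection multiplicity.
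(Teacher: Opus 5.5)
Your proposal is correct and follows essentially the same route as the paper. Part (a) uses the same $(u,v)$ parametrization of the dual branch; part (b) uses birationality of $B\to B^\vee$ together with the identification of the tangent of $\beta^\vee$ at $T^\vee$ with $p^\vee$, where your Puiseux argument (which amounts to $\operatorname{ord}v\geq\operatorname{ord}y>\operatorname{ord}u$ when $p=(0,0)$) replaces the paper's appeal to continuity; and part (c) uses the same Legendre transform, with your observation $g_i'=-(f_i')^{-1}$ playing the role of the paper's coefficient formula.

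There is one slip to correct in (a). Since $\operatorname{ord}u=c-1<c=\operatorname{ord}v$, the tangent line of $\beta^\vee$ is $\{v=0\}$, not $\{u=0\}$. With your convention $y=ux+v$, the line $p^\vee$ is also $\{v=0\}$, as your own formula $v+x(p)u-y(p)=0$ in (b) shows. So the conclusion that the tangent is $p^\vee$ is right, but both identifications in (a) are mislabeled.
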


\begin{proof}
For (a), choose affine coordinates with $p=(0,0)$ and $T=\{y=0\}$, and write $\beta$ as the graph $y=f(x)$, where $f$ has order $c$. The tangent line at the point $(x,f(x))$ is $y=ux-v$, with $u=f'(x)$ and $v=xf'(x)-f(x)$. Since $B\to B^\vee$ is birational, $x\mapsto(u(x),v(x))$ is a parametrization of $\beta^\vee$, and $u$, $v$ have orders $c-1$ and $c$ in $x$. Hence $\beta^\vee$ has multiplicity $c-1$. In the language of \cite[Section 7.3, p.\ 288]{BK}, an $r$-fold inflection point of $B$ gives a branch of $B^\vee$ of multiplicity $r+1$ -- this is the correspondence used in the proof of \cite[Section 9.1, Theorem 2(ii)]{BK} on p.\ 597, and for $r=1$ it is the correspondence between simple inflection points and simple cusps \cite[Section 9.1, p.\ 583]{BK}.

For (b), note that the branch $\beta^\vee$ passes through $T^\vee$, and that its tangent line there is the line $p^\vee$ dual to the center $p$ of $\beta$. For a smooth branch $\beta$ this is the biduality statement in \cite[Section 6.2, p.\ 255]{BK}, and in general it follows by continuity, the tangent line of $\beta^\vee$ at $T^\vee$ being the limit of the lines dual to points of $\beta$ tending to $p$. Since $B\to B^\vee$ is birational, distinct branches of $B$ give distinct branches of $B^\vee$, which proves (b).

For (c), choose affine coordinates with $p=(0,0)$ and $T=\{y=0\}$, and write the two branches as graphs $y=f_j(x)=\sum_{i\geq2}a_{j,i}x^i$ with $a_{j,2}\neq0$, $j=1,2$. Since $k\geq3$, the coefficients $a_{1,i}$ and $a_{2,i}$ coincide for $i<k$, and $a_{1,k}\neq a_{2,k}$. The tangent line at the point $(x,f(x))$ is $y=ux-v$, with $u=f'(x)$ and $v=xf'(x)-f(x)$. Since $f''(0)\neq0$, the dual branch is the graph of the Legendre transform $v=F(u)$, which satisfies $F'(u)=x(u)$. Inverting $u=f'(x)$, one sees that, for $i\geq3$, the coefficient of $u^i$ in $F$ equals $-a_i/(2a_2)^i$ plus a polynomial in $a_2^{\pm1},a_3,\dots,a_{i-1}$. Hence $F_1-F_2$ has order exactly $k$ in $u$, which is our claim.
\end{proof}

Finally, if $\beta_1,\dots,\beta_s$ are the local branches of a curve at a point $q$, then $\delta_q=\sum_j\delta(\beta_j)+\sum_{i<j}(\beta_i\cdot\beta_j)_q$ and $\delta(\beta_j)\geq\mult(\beta_j)-1$. Summing over all the points of $B^\vee$ we get
\begin{equation}\label{eq:dualdelta}
\delta(B^\vee)\geq\kappa(B^\vee)+\sum_{q\in B^\vee}\ \sum_{i<j}(\beta_i\cdot\beta_j)_q .
\end{equation}

\section{Proof of Theorem \ref{thm:octics}}\label{sec:octics}

For $n=8$, the MK condition reads $\sum_p(\mu_p+1-\frac1{|G_p|})=\frac{136}{3}$, Theorem \ref{thm:main} reads $3\tau(C)-\sum_pd_p=104$, and we have $\tau(C)\leq37$.

\begin{proposition}\label{prop:octics}
The numerical types of ADE singularities satisfying these three conditions are exactly the $18$ types listed in Table \ref{tab:octics}.
\end{proposition}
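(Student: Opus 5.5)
The plan is to turn the three conditions into a finite Diophantine enumeration over multisets of ADE types. Encode a numerical type by nonnegative integers: $a_k$ points of type $A_k$, $b_k$ points of type $D_k$, and $e_6,e_7,e_8$ points of types $E_6,E_7,E_8$. The three conditions then read as follows.

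\begin{enumerate}[label=(\arabic*)]
\item The MK condition: $\sum_p(\mu_p+1-1/|G_p|)=\frac{136}{3}$.
\item Theorem \ref{thm:main}: $3\tau(C)-\sum_pd_p=104$.
\item The Tjurina bound: $\tau(C)=\sum_p\mu_p\leq37$.
\end{enumerate}

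These already force finiteness. Condition (2) with $d_p\geq0$ gives $\tau(C)\geq35$, so $35\leq\tau(C)\leq37$. Since $\mu_p\geq1$, there are at most $37$ singular points, and every $\mu_p\leq37$. Hence only finitely many types $A_k,D_k$ with $k\leq37$ and finitely many multiplicities occur.

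Next I would reduce the enumeration using the structure of (2). Subtracting $3\tau$, condition (2) says $\sum_pd_p=3\tau(C)-104\in\{1,4,7\}$. Recall that $d_p=0$ for type $A$ and $d_p\geq\frac34$ otherwise. Consequently the number of $D/E$ points is at most $\frac{4}{3}\cdot7$, i.e.\ at most $9$, and the possible $D/E$ sub-configurations form a small finite list. For each such list, the values $d_p$ have the explicit denominators $2(k-2)$ and $8$, and their sum must be exactly one of $1,4,7$. This is a strong arithmetic filter. For $D_k$, $d_p=\frac32-\frac{3}{2(k-2)}$, so only configurations whose fractional parts cancel survive.

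For each surviving $D/E$ part, the remaining $A$-part must satisfy two equations. The first is $\sum_{A}\mu_p=\tau(C)-\sum_{D/E}\mu_p$. The second is the MK equation, which for $A_k$ contributes $k+1-\frac1{k+1}$. Subtracting the first from (1), one needs
$$\sum_{A\text{-points}}\Bigl(1-\frac1{k+1}\Bigr)=\frac{136}{3}-\tau(C)-\sum_{D/E}\Bigl(1-\frac1{|G_p|}\Bigr).$$
Each term on the left lies in $[\frac12,1)$. This bounds the number of $A$-points and, by the fractional parts, forces the denominators $k+1$ to combine into the required rational number. I would run this as a nested loop: first over $\tau\in\{35,36,37\}$, then over the $D/E$ multisets, then over partitions of the residual Milnor number into $A_k$'s satisfying the rational equation. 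This is exactly what the \texttt{SINGULAR} script of Appendix \ref{app:scripts} would do, and its output gives Table \ref{tab:octics}.

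The main obstacle is ensuring that the search is exhaustive and not merely plausible. I would state explicit a priori bounds for every loop variable: $k\leq37$, at most $9$ $D/E$ points, and at most $2\cdot\frac{136}{3}$ $A$-points from the $\frac12$ lower bound. I would also use exact rational arithmetic, so that no solution is lost and no spurious one is admitted. The resulting list would then be checked against the three conditions by hand, yielding exactly the $18$ types.
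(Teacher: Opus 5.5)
Your proposal is correct and matches the paper's proof. The bound $\tau(C)\leq 37$ forces $\sum_p d_p=3\tau(C)-104\leq 7$, which leaves finitely many $D/E$ configurations, each fixing $\tau(C)$; for each of them the $A$-part is a partition of the residual Tjurina number subject to the single MK linear equation, enumerated by computer with exact arithmetic.
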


\begin{proof}
By Corollary \ref{cor:main}, the singularities of type $D$ and $E$ satisfy $\sum_pd_p\leq7$. For each such choice, $\tau(C)$ is determined by Theorem \ref{thm:main}, and the singularities of type $A$ form a partition of the remaining Tjurina number satisfying one linear equation. The resulting finite list is obtained by the script in Appendix \ref{app:candidates}.
\end{proof}

\begin{table}[ht]
\centering\footnotesize
\setlength{\tabcolsep}{4pt}
\renewcommand{\arraystretch}{1.25}
\begin{tabular}{@{}llrrrrll@{}}
\toprule
 & singularities & $\tau$ & $\sum\delta_p$ & $\sum I_p$ & $\kappa$ & decompositions & argument\\
\midrule
O1 & $4A_1+4D_4+D_5+2E_6$ & 37 & 25 & 18 & 5 & none & Lemma \ref{lem:budget}\\
\midrule
N1 & $4A_1+A_2+3A_3+4D_4+D_5$ & 36 & 26 & 24 & 2 & none & Lemma \ref{lem:budget}\\
N2 & $10A_1+A_2+2D_4+2D_8$ & 36 & 27 & 26 & 1 & none & Lemma \ref{lem:budget}\\
N3 & $6A_1+4A_2+D_4+2D_5+D_8$ & 36 & 24 & 18 & 6 & $(5,1,1,1)$ & Lemma \ref{lem:pluecker}\\
N4 & $7A_1+A_2+2A_3+D_4+D_5+2D_6$ & 36 & 26 & 24 & 2 & none & Lemma \ref{lem:budget}\\
N5 & $6A_1+3A_2+A_3+A_4+D_4+D_5+E_8$ & 36 & 23 & 13 & 7 & $(6,1,1)$ & Lemma \ref{lem:pluecker}\\
N6 & $7A_1+2A_2+A_5+4D_5$ & 36 & 24 & 18 & 6 & $(5,1,1,1)$ & Lemma \ref{lem:pluecker}\\
N7 & $2A_1+7A_2+4D_5$ & 36 & 21 & 10 & 11 & $(8)$, $(7,1)$ & Lemma \ref{lem:pluecker}\\
N8 & $6A_1+3A_2+2A_3+D_5+D_6+E_7$ & 36 & 24 & 19 & 5 & $(5,1,1,1)$ & Lemma \ref{lem:pluecker}\\
N9 & $6A_1+4A_2+A_5+D_5+2E_6$ & 36 & 22 & 11 & 9 & $(7,1)$ & Lemma \ref{lem:pluecker}\\
N10 & $7A_1+4A_3+D_5+2E_6$ & 36 & 24 & 17 & 5 & none & Lemma \ref{lem:budget}\\
N11 & $A_1+9A_2+D_5+2E_6$ & 36 & 19 & 3 & 14 & $(8)$ & Lemma \ref{lem:pluecker}\\
\midrule
T1 & $13A_1+3A_2+A_{11}+D_5$ & 35 & 25 & 21 & 4 & none & Lemma \ref{lem:budget}\\
T2 & $14A_1+A_2+2A_7+D_5$ & 35 & 26 & 24 & 2 & none & Lemma \ref{lem:budget}\\
T3 & $11A_1+3A_3+2A_5+D_5$ & 35 & 26 & 25 & 1 & $(3,1,1,1,1,1)$ & Proposition \ref{prop:T3}\\
T4 & $6A_1+5A_2+3A_3+A_5+D_5$ & 35 & 23 & 17 & 6 & $(6,1,1)$, $(5,2,1)$ & Proposition \ref{prop:T4}\\
T5 & $7A_1+A_2+7A_3+D_5$ & 35 & 25 & 23 & 2 & $(4,1,1,1,1)$ & Proposition \ref{prop:T5}\\
T6 & $A_1+10A_2+3A_3+D_5$ & 35 & 20 & 9 & 11 & $(8)$, $(7,1)$ & Proposition \ref{prop:T6}\\
\bottomrule
\end{tabular}
\caption{The numerical types of MK-octics. Since $\nu(C)=37-\tau(C)$, the type O1 corresponds to a free curve, the types N1--N11 to nearly free curves, and T1--T6 to curves with $\nu(C)=2$. The column \texttt{decompositions} lists the degrees of the irreducible components allowed by Lemmas \ref{lem:genus} and \ref{lem:budget}, and \texttt{none} means no partition is consistent with the aforementioned results.}
\label{tab:octics}
\end{table}

The numbers $\sum_p\delta_p$, $\sum_pI_p$ and $\kappa=\sum_p\kappa_p$ in Table \ref{tab:octics} are obtained by adding the entries of Table \ref{tab:branches}. By Lemma \ref{lem:genus},
$$r-\sum_ig_i=\sum_p\delta_p-20,$$
and the decompositions $8=d_1+\dots+d_r$ compatible with Lemma \ref{lem:budget} are easily listed by hand. For the sanity check, the script in Appendix \ref{app:geometry} recomputes these data. Our aim not is to exclude the $18$ types and we will do it in three steps.

\subsection{Types excluded by numerics} For the types O1, N1, N2, N4, N10, T1 and T2, the number of irreducible components forced by Lemma \ref{lem:genus} is incompatible with Lemma \ref{lem:budget}. For instance, for O1 we have $\sum_p\delta_p=25$, hence $r\geq5$ and $\sum_{i<j}d_id_j\geq22$, while $\sum_pI_p=18$, a contradiction. The other cases can be treated in a similar way, namely
\begin{itemize}
\item N1, N4: $r\geq6$, so $\sum_{i<j}d_id_j\geq25>24$;
\item N2: $r\geq7$, so $\sum_{i<j}d_id_j\geq27>26$;
\item N10: $r\geq4$, so $\sum_{i<j}d_id_j\geq18>17$;
\item T1: $r\geq5$, so $\sum_{i<j}d_id_j\geq22>21$;
\item T2: $r\geq6$, so $\sum_{i<j}d_id_j\geq25>24$.
\end{itemize}

\subsection{Types excluded by the Pl\"ucker lemma} For the types N3, N5--N9 and N11, Lemmas \ref{lem:genus} and \ref{lem:budget} leave only the decompositions listed in Table \ref{tab:octics}, together with the value of $\sum_ig_i$. In each of them there is exactly one component $B$ of degree $e\geq3$, the other components are lines, so $g(B)=\sum_ig_i$, and $B$ contains all the singular local branches of $C$, so $\kappa(B)=\kappa$. The class $e^\vee$ given by \eqref{eq:class} then violates Lemma \ref{lem:pluecker}:
\begin{itemize}
\item N3 and N6: $B$ is a rational quintic with $\kappa(B)=6$, so $e^\vee=2$ and $\kappa(B^\vee)=-3$;
\item N5: $B$ is a rational sextic with $\kappa(B)=7$, so $e^\vee=3$ and $\kappa(B^\vee)=-2$;
\item N7: $B$ is either a rational octic or a septic of genus $1$, with $\kappa(B)=11$, so $e^\vee=3$ and $\kappa(B^\vee)<0$;
\item N8: $B$ is a rational quintic with $\kappa(B)=5$, so $e^\vee=3$ and $\kappa(B^\vee)=-1$;
\item N9: $B$ is a rational septic with $\kappa(B)=9$, so $e^\vee=3$ and $\kappa(B^\vee)=-3$;
\item N11: here $C$ is irreducible of genus $2$ and $\kappa(C)=14$, so $e^\vee=4$ and $C^\vee$ is a quartic of genus $2$, so that $\delta(C^\vee)=1<2=\kappa(C^\vee)$.
\end{itemize}

\subsection{The four remaining types}

\begin{proposition}\label{prop:T3}
The type T3 $=11A_1+3A_3+2A_5+D_5$ is not realized by a plane octic.
\end{proposition}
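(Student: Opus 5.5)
The plan is to pin down the irreducible components of a hypothetical octic $C$ of type T3 $=11A_1+3A_3+2A_5+D_5$ completely, and then to show that the configuration would need a cuspidal cubic with two flexes.

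\textbf{Step 1: the components.} From Table \ref{tab:octics} we have $\sum_p\delta_p=26$, $\sum_pI_p=25$ and $\kappa=1$. Lemma \ref{lem:genus} gives $r-\sum_ig_i=6$, so $r\geq6$. Lemma \ref{lem:budget} then leaves only the decomposition $8=3+1+1+1+1+1$, as recorded in the table. For this decomposition
$$\sum_{i<j}d_id_j=3\cdot5+\binom52=25=\sum_pI_p,$$
so we are in the equality case of Lemma \ref{lem:budget}. This means that at every singular point of $C$ the local branches lie on pairwise distinct components. Since $r=6$, all $g_i$ vanish, so the cubic $B$ is rational. As lines carry no singular branch, $\kappa(B)=\kappa=1$, and $B$ is a cuspidal cubic. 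Its cusp is the $A_2$-branch $S$ of the $D_5$ point, and the smooth branch $L$ of that $D_5$ point is one of the five lines. This line passes through the cusp but is not the cuspidal tangent, since $L\cdot S=2$.

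\textbf{Step 2: reading off the $A_5$ points.} Take a point of type $A_5$. By Table \ref{tab:branches} it has two smooth branches meeting with multiplicity $3$, and by Step 1 these lie on different components. Two distinct lines meet transversally, so one branch is a line $T$ and the other is a smooth branch of $B$, with $(T\cdot B)_p=3$. Thus $p$ is an inflection point of $B$ and $T$ is its inflectional tangent. The two $A_5$ points are distinct, so $B$ would have at least two inflection points. (Similarly, each $A_3$ point is a line tangent to $B$ at a smooth point, which causes no problem.)

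\textbf{Step 3: a cuspidal cubic has only one flex.} By \eqref{eq:class}, $e^\vee=2\cdot3+0-2-1=3$. Lemma \ref{lem:pluecker} then gives $\kappa(B^\vee)=2\cdot3+0-2-3=1$. By Lemma \ref{lem:duality}(a), each flex of $B$ with contact $c\geq3$ yields a dual branch of multiplicity $c-1\geq2$, and distinct flexes have distinct tangent lines, hence give distinct dual branches. Each such branch contributes at least $1$ to $\kappa(B^\vee)$, so $B$ has at most one flex. This contradicts Step 2, and T3 is not realized.

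The main point requiring care is Step 1, specifically the claim that equality in Lemma \ref{lem:budget} forces the branches at each singular point onto distinct components. This is what rules out realizing an $A_5$ point in some other way, for instance by two branches of the cubic itself. Since $B$ is rational with $\kappa(B)=1$, all its singularities are accounted for by the cusp, so $B$ has no self-intersections and that alternative cannot occur.
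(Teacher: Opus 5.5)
Your proof is correct, and Steps 1 and 2 follow the paper's argument exactly. The steps are the decomposition $(3,1,1,1,1,1)$ in the equality case of Lemma \ref{lem:budget}, the rational cubic being cuspidal, and each $A_5$ point forcing a line to be an inflectional tangent of $B$ at a smooth point.

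The only difference is how you exclude two flexes on a cuspidal cubic. The paper quotes the normal form $y^2z=x^3$ and the fact that such a cubic has exactly one inflection point \cite[Section 7.3, Proposition 13]{BK}. You instead derive the needed bound with the paper's own tools. You get $e^\vee=3$ from \eqref{eq:class}, hence $\kappa(B^\vee)=1$ by Lemma \ref{lem:pluecker}. By Lemma \ref{lem:duality}(a), each flex contributes at least $1$ to $\kappa(B^\vee)$. Your route is self-contained, uses the same duality mechanism as the paper's treatment of T4--T6, and needs only the inequality. The paper's route is a one-line citation that gives the exact count.

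Your remark that distinct flexes have distinct tangent lines holds, since a line meets a cubic with total multiplicity $3$. It is not even needed, because birationality of $B\to B^\vee$ already sends distinct branches of $B$ to distinct branches of $B^\vee$.
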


\begin{proof}
Here $r-\sum g_i=6$ and $\sum_pI_p=25$, so by Lemma \ref{lem:budget} the curve $C$ consists of a cubic $T$ and five lines, all the components are rational, and $\sum_{i<j}d_id_j=\sum_pI_p$. By the equality case of Lemma \ref{lem:budget}, at each singular point of $C$ the local branches lie on pairwise distinct components. In particular, $T$ contains the singular branch of the $D_5$ point, hence $T$ is a cuspidal cubic, and its only singular point is this cusp.

Consider now one of the two $A_5$ points. Its two branches are smooth, they have intersection multiplicity $3$, and they lie on distinct components. As two lines meet transversally, one branch lies on a line $L$ and the other one on $T$. Hence $L$ meets $T$ with multiplicity $3$ at a smooth point of $T$, i.e.\ $L$ is an inflectional tangent of $T$. Since $L\cdot T=3$, the two $A_5$ points give two distinct inflectional tangents of $T$. This is impossible, since a cuspidal cubic is projectively equivalent to $y^2z=x^3$ and has exactly one inflection point, see \cite[Section 7.3, Proposition 13]{BK}.
\end{proof}

\begin{proposition}\label{prop:T4}
The type T4 $=6A_1+5A_2+3A_3+A_5+D_5$ is not realized by a plane octic.
\end{proposition}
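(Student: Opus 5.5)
The plan is to work through the two decompositions $(6,1,1)$ and $(5,2,1)$ listed in Table \ref{tab:octics}. From the table, $\sum_p\delta_p=23$, so Lemma \ref{lem:genus} gives $r-\sum_ig_i=3$. Since $r=3$ in both decompositions, all components are rational. The singular local branches of $C$ are the five cusps $A_2$ and the $A_2$-type branch $S$ of the $D_5$ point, so $\kappa=6$. Lines and conics carry no singular branches, so in both cases these six branches all lie on the component $B$ of degree $e\geq5$, and $\kappa(B)=6$.

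\emph{Case $(5,2,1)$.} This case should be immediate. Here $\sum_{i<j}d_id_j=10+5+2=17=\sum_pI_p$. By \eqref{eq:class}, the rational quintic $B$ has class
$$e^\vee=10-2-6=2,$$
and then Lemma \ref{lem:pluecker} gives $\kappa(B^\vee)=4-2-5=-1<0$, a contradiction. This is the same argument as for N3 and N6.

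\emph{Case $(6,1,1)$.} This is the real work. Let $B$ be the rational sextic and $L_1,L_2$ the two lines. By \eqref{eq:class}, $e^\vee=12-2-6=4$. Lemma \ref{lem:pluecker} then gives
$$\kappa(B^\vee)=8-2-6=0,\qquad \delta(B^\vee)=p_a(4)=3.$$
So $B^\vee$ is a rational quartic with only unibranch-smooth singularities. By Lemma \ref{lem:duality}(a), $B$ has no inflection points and no higher-order flex contacts. By \eqref{eq:dualdelta}, the total pairwise intersection of branches of $B^\vee$ is at most $3$.

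Next I would locate the two smooth branches at the $A_5$ point and at each of the three $A_3$ points.
\begin{itemize}
\item The two branches cannot both lie on lines, since distinct lines meet transversally.
\item At the $A_5$ point, a line and a smooth branch of $B$ with contact $3$ would give an inflection of $B$, which is excluded. So both branches of the $A_5$ point lie on $B$. They share a tangent $T$, and each has contact exactly $2$ with $T$ (no flexes). By Lemma \ref{lem:duality}(c), the dual branches at $T^\vee$ are smooth with intersection multiplicity $3$. This already uses up the whole budget $\delta(B^\vee)=3$.
\item Any further coincidence of tangent lines among branches of $B$ would push $\delta(B^\vee)$ above $3$. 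By Lemma \ref{lem:duality}(b), a common tangent to two branches contributes at least $1$ to \eqref{eq:dualdelta}. Such coincidences include an $A_3$ point with both branches on $B$, or a bitangent line. So none of these can occur.
\end{itemize}

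Hence each $A_3$ point must consist of a branch of $B$ and a line $L_i$ tangent to it with contact $2$. The same exclusion applies to cusps: a cusp and another branch of $B$ sharing a tangent is forbidden for the same reason.

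I would then run a B\'ezout count on the lines. Each $L_i$ meets $B$ with total multiplicity $6$. Three $A_3$ points require three tangencies distributed among only two lines, so some $L_i$ is tangent to $B$ at two distinct points. That makes $L_i$ a bitangent of $B$, and by Lemma \ref{lem:duality}(b) $L_i^\vee$ is a point of $B^\vee$ with at least two branches. This adds at least $1$ to the intersection sum in \eqref{eq:dualdelta}, on top of the $3$ coming from the $A_5$ point, so
$$\delta(B^\vee)\geq 4>3,$$
a contradiction.

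I expect the main obstacle to be the careful bookkeeping in this step. One must make sure that the $A_5$ contribution and the bitangent contribution land at distinct points of $B^\vee$, or else still add up. If the tangent $T$ at the $A_5$ point coincided with a line $L_i$, the intersection multiplicities would only increase, so this should be harmless. One must also check the sub-case where the $D_5$ point's line branch is $L_i$ or lies on $B$; both sub-cases should be handled by the same tangent-line count.
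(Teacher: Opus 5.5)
Your proof is correct and is essentially the paper's argument. The case $(5,2,1)$ is excluded by Lemma \ref{lem:pluecker}; note that $4-2-5=-3$, not $-1$, though only the sign matters. In the case $(6,1,1)$ you combine three facts, exactly as the paper does: $B$ has no flexes, the $A_5$ point contributes $3$ at $T^\vee$ by Lemma \ref{lem:duality}(c), and the three $A_3$ points on two lines force a bitangent, giving $\delta(B^\vee)\geq4>3$.

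The only variation is how you rule out an $A_3$ point with both branches on $B$. You use the already exhausted budget $\delta(B^\vee)=3$, whereas the paper counts the $17-13=4$ units of same-component intersection. Your closing worries are moot. A line through the $A_5$ point would give it a third branch, so $T\neq L_i$, and the $D_5$ point never enters the count.
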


\begin{proof}
Here $r-\sum g_i=3$ and $\sum_pI_p=17$. Since $r\geq4$ would give us $\sum_{i<j}d_id_j\geq18$, we get $r=3$, all components rational, and the admissible decomposition is either $(5,2,1)$ or $(6,1,1)$. The six singular local branches (the five $A_2$ points and the singular branch of $D_5$) lie on the component $B$ of degree $e\geq5$, so $\kappa(B)=6$. In the case $(5,2,1)$ this gives $e^\vee=2$ and $\kappa(B^\vee)=-3$, contradicting Lemma \ref{lem:pluecker}.

Consider the case $(6,1,1)$. Then $B$ is a rational sextic of class $4$, so $B^\vee$ is a rational quartic of class $6$, with $\kappa(B^\vee)=0$ and $\delta(B^\vee)=3$. By Lemma \ref{lem:duality}(a), $B$ has no inflection points. Since $\sum_{i<j}d_id_j=13$, the intersection multiplicities between local branches lying on the same component add up to $17-13=4$. Such pairs of branches can only lie on $B$.

The $A_5$ point cannot have one branch on a line $L$ and the other on $B$ since then $L$ would be an inflectional tangent of $B$. Hence both branches of the $A_5$ point lie on $B$, and this uses $3$ of the $4$ units above. The remaining unit corresponds to one $A_1$ point with both branches on $B$. It follows that each $A_3$ point consists of a line tangent to $B$ at a smooth point of $B$.

There are three $A_3$ points and only two lines, so one of the lines, say $L$, is tangent to $B$ at two distinct points. By Lemma \ref{lem:duality}(b), $L^\vee$ is a point of $B^\vee$ with at least two branches. On the other hand, let $p$ be the $A_5$ point and $T$ the common tangent of its two branches. These branches are non-inflectional, so by Lemma \ref{lem:duality}(c) their dual branches meet at $T^\vee$ with intersection multiplicity $3$. Note that $T\neq L$, since $L$ does not pass through $p$. Now \eqref{eq:dualdelta} gives $\delta(B^\vee)\geq0+1+3=4>3$, a contradiction.
\end{proof}

\begin{proposition}\label{prop:T5}
The type T5 $=7A_1+A_2+7A_3+D_5$ is not realized by a plane octic.
\end{proposition}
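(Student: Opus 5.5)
The plan follows the pattern of Propositions \ref{prop:T3} and \ref{prop:T4}: first fix the global configuration from Lemmas \ref{lem:genus} and \ref{lem:budget}, then reach a contradiction on the dual curve of the non-linear component using \eqref{eq:dualdelta}.

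\emph{Step 1: the configuration.} For T5 $=7A_1+A_2+7A_3+D_5$, Table \ref{tab:branches} gives $\sum_p\delta_p=25$, $\sum_pI_p=23$ and $\kappa=2$. By Lemma \ref{lem:genus}, $r-\sum g_i=5$. By Lemma \ref{lem:budget}, $r\geq6$ would force $\sum_{i<j}d_id_j\geq25>23$, so $r=5$ and all components are rational. The only admissible decomposition, as recorded in Table \ref{tab:octics}, is $(4,1,1,1,1)$. So $C=B\cup L_1\cup\dots\cup L_4$ with $B$ a rational quartic, and $\sum_{i<j}d_id_j=16+6=22$. The defect $23-22=1$ in Lemma \ref{lem:budget} means exactly one pair of branches with intersection multiplicity $1$ lies on a single component. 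Since a line carries at most one branch at a point, this pair lies on $B$. Thus one $A_1$ point is a node of $B$, and every other singular point has its branches on distinct components.

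\emph{Step 2: the singularities of $B$ and the role of the lines.} The $A_2$ branch and the $A_2$-type branch of $D_5$ both lie on $B$. Since $\delta(B)=3$ for a rational quartic, $B$ has exactly two cusps and one node, and $\kappa(B)=2$. Formula \eqref{eq:class} gives $e^\vee=8-2-2=4$. Hence $B^\vee$ is a rational quartic with $\delta(B^\vee)=3$ and, by Lemma \ref{lem:pluecker}, $\kappa(B^\vee)=2\cdot4-2-4=2$.

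Each $A_3$ point consists of two smooth branches with contact $2$ on distinct components. Two lines meet transversally, and the node of $B$ is already used, so each $A_3$ point is a line tangent to $B$ at a smooth point of $B$. The $D_5$ point is a line $L_0$ through a cusp of $B$, meeting the cuspidal branch with multiplicity $2$.

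\emph{Step 3: counting intersections.} The total intersection of the four lines with $B$ is $16$. The seven tangencies use $14$ and $L_0$ uses $2$ at the cusp, so these account for everything. Since each line meets $B$ with total multiplicity $4$, a line has at most two tangencies, and $L_0$ has at most one. Distributing seven tangencies under these caps forces the three lines other than $L_0$ to be bitangents of $B$, with $L_0$ tangent once. (As a consistency check, the remaining $A_1$ points are the $6$ pairwise intersections of the lines.)

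\emph{Step 4: the contradiction.} By Lemma \ref{lem:duality}(b), each bitangent $L_i$ gives a point $L_i^\vee$ of $B^\vee$ with at least two local branches. These are three distinct points, each contributing at least $1$ to the double sum in \eqref{eq:dualdelta}. Therefore
$$\delta(B^\vee)\geq\kappa(B^\vee)+3=5>3,$$
a contradiction. The step needing the most care is the local bookkeeping in Steps 2 and 3. One must check that no $A_3$ point can involve the node of $B$ or two lines, and that $L_0$ meets the cusp with multiplicity exactly $2$ (it is not the cuspidal tangent), so that the count of $16$ is exhausted precisely as described.
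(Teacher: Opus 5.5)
Your proposal is correct and follows the paper's proof essentially step by step. It uses the same decomposition $(4,1,1,1,1)$, the same quartic with two cusps and a node whose dual is a rational quartic with $\kappa=2$ and $\delta=3$, the same forcing of three bitangents from $16=7\cdot 2+2$, and the same contradiction $\delta\geq 2+3$ via Lemma \ref{lem:duality}(b) and \eqref{eq:dualdelta}. The only cosmetic difference is that you rule out further line--quartic intersections by exhausting the B\'ezout number $16$ directly, whereas the paper first accounts for all seven $A_1$ points (the node together with the six pairwise intersections of the lines).
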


\begin{proof}
Here $r-\sum g_i=5$ and $\sum_pI_p=23$, so Lemma \ref{lem:budget} gives the decomposition $(4,1,1,1,1)$, all components rational, and $\sum_{i<j}d_id_j=22$. Hence the intersection multiplicities between branches lying on the same component add up to $1$. The quartic $Q$ contains the $A_2$ point and the singular branch of $D_5$, and the remaining unit is a node of $Q$. Thus $Q$ is a rational quartic with two cusps and one node, and $Q$ is smooth at all the other points of $C$. Then $Q$ has class $4$, and $Q^\vee$ is a rational quartic with $\kappa(Q^\vee)=2$ and $\delta(Q^\vee)=3$.

No singularity in T5 has three local branches. Hence the six intersection points of the four lines are $A_1$ points not on $Q$, and together with the node of $Q$ they account for all seven $A_1$ points. It follows that
$$\sum_i L_i\cdot Q=16=7\cdot 2+2$$
comes from the seven $A_3$ points, each being a tangency of a line with $Q$, and from the $D_5$ point, where a line passes through a cusp of $Q$. Since each line meets $Q$ with multiplicity $4$, three of the four lines are bitangent to $Q$. By Lemma \ref{lem:duality}(b) this gives three distinct points of $Q^\vee$ with at least two branches, and \eqref{eq:dualdelta} yields $\delta(Q^\vee)\geq2+3=5>3$, a contradiction.
\end{proof}

\begin{proposition}\label{prop:T6}
The type T6 $=A_1+10A_2+3A_3+D_5$ is not realized by a plane octic.
\end{proposition}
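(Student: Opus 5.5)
The plan is to follow the pattern of Propositions \ref{prop:T4} and \ref{prop:T5}: determine the possible decompositions of $C$ from Table \ref{tab:octics}, then rule out each one with Lemma \ref{lem:pluecker}, supplemented by the dual-branch estimate \eqref{eq:dualdelta} where needed.

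For T6 we have $\sum_p\delta_p=20$, $\sum_pI_p=9$ and $\kappa=11$. Lemma \ref{lem:genus} therefore gives $r-\sum_ig_i=0$. Lemma \ref{lem:budget} leaves only two decompositions: an irreducible octic of genus $1$, or $(7,1)$ with a septic $B$ of genus $2$ and a line. I would handle the case $(7,1)$ first, since Pl\"ucker alone seems to suffice.
\begin{itemize}
\item All singular local branches (ten $A_2$ cusps and the cusp branch of $D_5$) lie on $B$, so $\kappa(B)=11$.
\item Then \eqref{eq:class} gives $e^\vee=14+4-2-11=5$.
\item So $B^\vee$ is a quintic of genus $2$ with $\kappa(B^\vee)=10+2-7=5$ and $\delta(B^\vee)=p_a(5)-2=4$.
\item This contradicts $\kappa(B^\vee)\le\delta(B^\vee)$ in Lemma \ref{lem:pluecker}.
\end{itemize}

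In the irreducible case, $C$ is an octic of genus $1$ with $\kappa(C)=11$. Formula \eqref{eq:class} gives class $16+2-2-11=5$. Hence $C^\vee$ is a quintic of genus $1$ with $\kappa(C^\vee)=10+0-8=2$ and $\delta(C^\vee)=6-1=5$, so Pl\"ucker alone gives no contradiction. I would instead use the three $A_3$ points $p_1,p_2,p_3$. Each one consists of two smooth branches of $C$ with a common tangent line $T_i$, and both branches lie on $C$ because $C$ is irreducible.

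By Lemma \ref{lem:duality}(b), each $T_i^\vee$ is a point of $C^\vee$ with at least two branches, and the two branches coming from $p_i$ have intersection multiplicity at least $2$. This holds whether or not these branches are inflectional, since (b) needs no contact assumption. The contributions are then bounded as follows.
\begin{itemize}
\item If the $T_i$ are distinct, the three points $T_i^\vee$ contribute at least $2+2+2=6$ to the double sum in \eqref{eq:dualdelta}.
\item If two tacnodes share a tangent line, that point carries four branches. The two pairs with a common center each contribute at least $2$, and the four remaining pairs at least $1$ each, which again gives at least $6$.
\item If all three tacnodes share a tangent line, the contribution is even larger.
\end{itemize}
Thus \eqref{eq:dualdelta} yields $\delta(C^\vee)\ge\kappa(C^\vee)+6=8>5$, a contradiction. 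This excludes T6.

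The main point to check carefully is the case analysis on coincident tangent lines in the irreducible case. That step is what lets us avoid Lemma \ref{lem:duality}(c), whose contact hypothesis we cannot verify here. Everything else is routine arithmetic with the entries of Table \ref{tab:branches}.
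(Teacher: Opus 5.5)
Your proof is correct and matches the paper's argument step for step: you reduce to an irreducible genus-$1$ octic or a genus-$2$ septic plus a line, get the Pl\"ucker contradiction $\kappa(B^\vee)=5>4=\delta(B^\vee)$ in the reducible case, and in the irreducible case count at the duals of the three tacnodal tangents via Lemma \ref{lem:duality}(b) and \eqref{eq:dualdelta}, including the coincident-tangent case. The only cosmetic difference is that you keep the term $\kappa(C^\vee)=2$ in \eqref{eq:dualdelta}, which the paper drops because $6>5$ already suffices.
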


\begin{proof}
Here $r=\sum g_i$ and $\sum_pI_p=9$, so $r\leq2$. Hence either $C$ is irreducible of genus $1$, or $C=S\cup L$ with $S$ a septic of genus $2$ and $L$ a line. In both cases the component of degree $\geq7$ has $\kappa=11$.

In the second case, $S$ has class $14+4-2-11=5$, and $S^\vee$ is a quintic of genus $2$ with $\kappa(S^\vee)=10+4-2-7=5>4=\delta(S^\vee)$, contradicting Lemma \ref{lem:pluecker}.

In the first case, $C$ has class $16+2-2-11=5$, and $C^\vee$ is a quintic of genus $1$, so $\delta(C^\vee)=5$. Let $T_1,T_2,T_3$ be the tangent lines at the three tacnodes of $C$. By Lemma \ref{lem:duality}(b), each $T_j^\vee$ is a point of $C^\vee$ carrying two branches with intersection multiplicity at least $2$. If the $T_j$ are distinct, \eqref{eq:dualdelta} gives $\delta(C^\vee)\geq6$. If two of them coincide, the corresponding point of $C^\vee$ has at least four branches, and \eqref{eq:dualdelta} gives $\delta(C^\vee)\geq2+2+4=8$. In both cases we get a contradiction.
\end{proof}

This completes the proof of Theorem \ref{thm:octics}.

\begin{remark}
Proposition \ref{prop:octics} shows that an MK-octic would have a singularity of type $D_5$ or $D_8$. This already recovers, in degree $8$, the results \cite[Theorem 5.7, Corollaries 5.10 and 5.11, Theorems 5.12 and 5.16]{DP}. Note also that the numerical types of free and nearly free MK-octics are excluded by the simplest arguments above, while in degree $6$ one MK-sextic is free and the other two are nearly free, see \cite[Example 6.4]{DP}.
\end{remark}

\section{Proof of Theorem \ref{thm:decics}}\label{sec:decics}

For $n=10$, the MK condition reads $\sum_p(\mu_p+1-\frac1{|G_p|})=\frac{220}{3}$, Theorem \ref{thm:main} reads $3\tau(C)-\sum_pd_p=170$, and we have $\tau(C)\leq61$.

\begin{proposition}\label{prop:decics}
There are exactly $284$ numerical types of ADE singularities satisfying these three conditions. For $\tau=57,58,59,60,61$ there are $30$, $96$, $108$, $48$ and $2$ of them, respectively. The two types with $\tau=61$ are
$$\mathrm{W}_{1}=6A_1+3D_4+D_5+4E_6+2E_7\quad\text{ and }\quad \mathrm{W}_{2}=8A_1+A_2+D_4+D_5+6E_7.$$
\end{proposition}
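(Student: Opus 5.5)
The plan mirrors the proof of Proposition \ref{prop:octics}: reduce to a finite enumeration and let the script of Appendix \ref{app:candidates}, run with $n=10$, produce the list. For $n=10$, i.e.\ $m=5$, we have three constraints. The MK condition is $\sum_p(\mu_p+1-1/|G_p|)=\frac{220}{3}$. Theorem \ref{thm:main} gives $3\tau(C)-\sum_pd_p=170$. The bound $\tau(C)\leq 3m(m-1)+1=61$ completes the list.

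\emph{Step 1: the $D$ and $E$ part.} Corollary \ref{cor:main}(iii) gives $\sum_pd_p\leq m^2-3m+3=13$. Since $d_p\geq\frac34$ for every $D$ or $E$ point, there are at most $17$ such points. Moreover $d_p=\frac{3(k-3)}{2(k-2)}<\frac32$ for $D_k$, so only $D_k$ with $\tau_p=k$ bounded by the $\tau$ budget can occur. Hence the admissible multisets of $D$ and $E$ singularities form a finite list.

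\emph{Step 2: the $A$ part.} For each such multiset, Theorem \ref{thm:main} determines $\tau(C)=(170+\sum_pd_p)/3$. Only multisets with this value an integer in $[57,61]$ are kept; the lower bound is Corollary \ref{cor:main}(i). The $A$ points then have a prescribed total Tjurina number $T_A=\tau(C)-\sum_{D,E}\tau_p$, so their indices form a partition of $T_A$. Each $A_k$ contributes $k+1-\frac1{k+1}$ to the MK sum, so the MK condition becomes the single linear equation
$$\sum_{A_k}\Bigl(k+1-\frac1{k+1}\Bigr)=\frac{220}{3}-\sum_{D,E}\Bigl(\mu_p+1-\frac1{|G_p|}\Bigr).$$
Enumerating all partitions of $T_A\leq61$ and checking this equation in exact rational arithmetic gives the complete finite list.

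\emph{Step 3: counting.} The counts $30,96,108,48,2$ for $\tau=57,\dots,61$, and the two types $\mathrm W_1,\mathrm W_2$ at $\tau=61$, are read off from the output. As a hand check for $\mathrm W_1$, the $D$ and $E$ part gives
$$\sum_pd_p=3\cdot\tfrac34+\tfrac43\cdot\tfrac34\cdot\ldots$$
is not how to organize it; instead one checks directly that $\sum_pd_p=3\cdot\frac34+1+4\cdot\frac32+2\cdot\frac{15}8=13$, so $\tau=61$, and then verifies the MK equation. The $\tau=61$ case is also the extremal case $\sum_pd_p=13$, which makes a hand verification of these two types feasible.

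The main obstacle is organizing the enumeration so that it is provably exhaustive and not too slow. Partitions of numbers near $60$ are numerous, so the plan is to prune with the MK equation while generating partitions, using that $k+1-\frac1{k+1}$ is increasing in $k$. The exact count $284$ itself cannot be checked by hand and rests on the script.
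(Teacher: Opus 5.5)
Your proposal is correct and takes the paper's approach: you bound the $D$/$E$ part by $\sum_pd_p\le 13$ (Corollary \ref{cor:main}(iii)), obtain $\tau(C)=(170+\sum_pd_p)/3\in[57,61]$ from Theorem \ref{thm:main}, and enumerate the $A$-parts as partitions of the remaining Tjurina number subject to the MK equation, with the exact counts resting on the script of Appendix \ref{app:candidates}, just as in the paper. Your hand checks for $\mathrm W_1$ and $\mathrm W_2$ are right: both have $\sum_pd_p=13$, hence $\tau=61$, and both satisfy the MK equation. You should delete the aborted computation at the start of Step 3.
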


\begin{proof}
The proof is the same as for Proposition \ref{prop:octics}, see the script in Appendix \ref{app:candidates}.
\end{proof}

\begin{proof}[Proof of Theorem \ref{thm:decics}]
By Corollary \ref{cor:main} and Proposition \ref{prop:decics} it is enough to exclude the types F1 and F2. We use the notation of Section \ref{sec:facts}, and the data can be checked with the script in Appendix \ref{app:geometry}.

For F1 one has $\sum_p\delta_p=38$ and $\sum_pI_p=23$, hence $r-\sum g_i=3$. Lemma \ref{lem:budget} leaves only two decompositions $(8,1,1)$ and $(7,2,1)$, with all components rational. The component $B$ of degree $e\geq7$ contains all singular branches, so $\kappa(B)=4\cdot2+2+1=11$. If $e=8$, then $e^\vee=3$ and $\kappa(B^\vee)=-4$. If $e=7$, then $e^\vee=1$. Both contradict Lemma \ref{lem:pluecker}.

For F2 one has $\sum_p\delta_p=39$ and $\sum_pI_p=31$, hence $r-\sum g_i=4$, and $\kappa=8$. Lemma \ref{lem:budget} leaves three decompositions:
\begin{itemize}
\item $(7,1,1,1)$ with all components rational: here $e^\vee=4$ and $\kappa(B^\vee)=-1$, which contradicts Lemma \ref{lem:pluecker};
\item $(6,2,1,1)$ with all components rational: here $e^\vee=2$ and $\kappa(B^\vee)=-4$, again a contradiction;
\item $(6,1,1,1,1)$, where the sextic $B$ has genus $1$.
\end{itemize}
In the last case $B$ has class $4$, so $B^\vee$ is a quartic of genus $1$ with $\delta(B^\vee)=2$ and $\kappa(B^\vee)=2$. By \eqref{eq:dualdelta}, no point of $B^\vee$ can have two or more local branches.

On the other hand, $\delta(B)=p_a(6)-1=9$, and the eight singular branches on $B$ (the $A_2$ point, the singular branch of $D_5$ and the cusps of the six $E_7$ points) contribute $8$ to $\delta(B)$. If both branches of an $E_7$ point lay on $B$, they would contribute $3$ more, which is impossible. Hence for each $E_7$ point, the smooth branch lies on a line, which is therefore the tangent line to $B$ at the corresponding cusp. Each such line meets $B$ with multiplicity $3$ at this cusp, so a line can be the cuspidal tangent at no more than two of these cusps. Since there are six $E_7$ points and four lines, at least one line $L$ is tangent to $B$ at two distinct cusps. By Lemma \ref{lem:duality}(b), $L^\vee$ is a point of $B^\vee$ with at least two branches, a contradiction.
\end{proof}

\begin{remark}
A systematic, computer aided application of the arguments of Sections \ref{sec:facts} and \ref{sec:octics} to the remaining $282$ numerical types of degree $10$ excludes a large part of them, but by no means all of them. In the remaining cases the curve typically has an irreducible component of degree at least $7$, for which the counting arguments of Section 4 give no contradiction. We do not pursue this here due to lack of efficient techniques.
\end{remark}
We would like to conclude our paper by the following difficult problem.
\begin{question}
Is there a Miyaoka--Kobayashi curve of degree $10$? Is there a nearly free one?
\end{question}
\section*{Acknowledgements}
We would like to remember Wolfgang Ebeling, a friend of both authors and an exceptionally kind and warm-hearted person. It was Wolfgang who first introduced Piotr Pokora to MK-curves: after a discussion in Hannover, he handed him a copy of Ivinskis’ Diplomarbeit, a small gesture, entirely typical of his generosity, that some years later became the starting point of our interest in these curves. Wolfgang was a doctoral student of Egbert Brieskorn, and in that same spirit we have drawn many of the results used here from the wonderful book Plane Algebraic Curves by Brieskorn and Knörrer.
\section*{Funding}
Piotr Pokora is supported by the National Science Centre (Poland) Sonata Bis Grant \textbf{\path{2023/50/E/ST1/00025}}. For the purpose of Open Access, the author has applied a CC-BY public copyright license to any Author Accepted Manuscript (AAM) version arising from this submission.

\section*{AI Declaration}
All mathematical results in this paper were obtained by the authors. We used \texttt{Claude} (Anthropic) only to consolidate scattered symbolic computations in \texttt{SINGULAR} into two scripts that readers can run on their own computers.

\section*{Data Availability Statement}
The scripts supporting the computations in this paper are available in Appendix \ref{app:scripts}.

\section*{Conflict of Interest}
The authors declare that they have no conflicts of interest related to this manuscript.

\makeatletter
\enddoc@text
\let\enddoc@text\relax
\makeatother

\newpage
\appendix
\section{Local orbifold Euler numbers for ADE singularities}\label{app:eorb}
Table \ref{tab:eorb} lists the local orbifold Euler numbers of the ADE singularities for $0\leq\alpha\leq\lct_p$. For $\alpha_p\leq\alpha\leq\lct_p$, with $\alpha_p$ as in Table \ref{tab:local}, the entries agree with the formula in Lemma \ref{lem:eorb}.

\begin{table}[ht]
\centering\small
\renewcommand{\arraystretch}{1.5}
\begin{tabular}{@{}ll@{}}
\toprule
 & $\eorb(0;\CC^2,\alpha C)$\\
\midrule
$A_1$ & $(1-\alpha)^2,\quad 0\le\alpha\le1$\\ \addlinespace[3pt]
$A_{2k}$, $k\ge1$ & $\begin{cases}
1-2\alpha, & 0\le\alpha\le\frac{2k-1}{4k+2}\\[2pt]
\frac{(2k+3-(4k+2)\alpha)^2}{8(2k+1)}, & \frac{2k-1}{4k+2}<\alpha\le\frac{2k+3}{4k+2}
\end{cases}$\\ \addlinespace[3pt]
$A_{2k+1}$, $k\ge1$ & $\begin{cases}
1-2\alpha, & 0\le\alpha\le\frac{k}{2k+2}\\[2pt]
\frac{(k+2-(2k+2)\alpha)^2}{4(k+1)}, & \frac{k}{2k+2}<\alpha\le\frac{k+2}{2k+2}
\end{cases}$\\ \addlinespace[3pt]
$D_{2k}$, $k\ge2$ & $\begin{cases}
(1-2\alpha)(1-\alpha), & 0\le\alpha\le\frac{k-2}{2k-3}\\[2pt]
\frac{(k-(2k-1)\alpha)^2}{4(k-1)}, & \frac{k-2}{2k-3}<\alpha\le\frac{k}{2k-1}
\end{cases}$\\ \addlinespace[3pt]
$D_{2k+1}$, $k\ge2$ & $\begin{cases}
(1-2\alpha)(1-\alpha), & 0\le\alpha\le\frac{2k-3}{4k-4}\\[2pt]
\frac{(2k+1-4k\alpha)^2}{8(2k-1)}, & \frac{2k-3}{4k-4}<\alpha\le\frac{2k+1}{4k}
\end{cases}$\\ \addlinespace[3pt]
$E_6$ & $\begin{cases}
1-3\alpha, & 0\le\alpha\le\frac{1}{12}\\[2pt]
\frac{(7-12\alpha)^2}{48}, & \frac{1}{12}<\alpha\le\frac{7}{12}
\end{cases}$\\ \addlinespace[3pt]
$E_7$ & $\begin{cases}
1-3\alpha, & 0\le\alpha\le\frac{1}{9}\\[2pt]
\frac{(5-9\alpha)^2}{24}, & \frac{1}{9}<\alpha\le\frac{5}{9}
\end{cases}$\\ \addlinespace[3pt]
$E_8$ & $\begin{cases}
1-3\alpha, & 0\le\alpha\le\frac{2}{15}\\[2pt]
\frac{(8-15\alpha)^2}{60}, & \frac{2}{15}<\alpha\le\frac{8}{15}
\end{cases}$\\ \addlinespace[3pt]
\bottomrule
\end{tabular}
\medskip
\caption{Local orbifold Euler numbers of simple singularities
($\eorb=0$ beyond the upper bound, where the pair is not log canonical).}
\label{tab:eorb}
\end{table}

\section{\textsc{Singular} scripts}\label{app:scripts}

The following scripts were run with \texttt{SINGULAR} 4.3.2 \cite{DGPS}. 
In both scripts a singularity type is coded by an integer: $A_k$ by $k$, $D_k$ by $100+k$ and $E_k$ by $200+k$.

\subsection{The numerical types}\label{app:candidates}
This script proves Propositions \ref{prop:octics} and \ref{prop:decics}. It lists all the numerical types for $n=8$, and for $n=10$ it prints their number for each value of $\tau$ together with the two types with $\tau=61$.

\lstinputlisting{mk_candidates.sing}

The output for $n=8$ consists of the $18$ types in Table \ref{tab:octics}, listed in the same order: there are $1$, $11$ and $6$ types with $\tau=37,36,35$, respectively. For $n=10$ it gives $284$ types: there are $2$, $48$, $108$, $96$ and $30$ types with $\tau=61,60,59,58,57$, respectively, and the two types with $\tau=61$ are F1 and F2.

\subsection{Numerical data for Sections \ref{sec:octics} and \ref{sec:decics}}\label{app:geometry}
For each type in Table \ref{tab:octics} and for the types F1, F2 of Proposition \ref{prop:decics}, this script computes $\sum_p\delta_p$, $\sum_pI_p$ and $\kappa$, lists the decompositions allowed by Lemmas \ref{lem:genus} and \ref{lem:budget}, and applies the Pl\"ucker test of Lemma \ref{lem:pluecker} when there is a single component of degree at least $3$. The test fails in all cases except T3, T4 with $(6,1,1)$, T5, T6 with $(8)$ and F2 with $(6,1,1,1,1)$. These are exactly the cases treated by hand in Propositions \ref{prop:T3}--\ref{prop:T6} and in the proof of Theorem \ref{thm:decics}.

\lstinputlisting{mk_geometry.sing}

\end{document}